\def\comment#1{}
\newtheorem{theorem}{Theorem}
\newtheorem{definition}[theorem]{Definition}
\newtheorem{lemma}[theorem]{Lemma}
\newtheorem{proposition}[theorem]{Proposition}
\theoremstyle{remark}
\newtheorem{remark}[theorem]{Remark}
 \renewcommand{\phi}{\varphi}
\newcommand{\N}{\mathbb{N}}
\newcommand{\R}{\mathbb{R}}
\DeclareMathOperator{\supp}{supp}
\newcommand{\bes}{\begin{subequations}}
\newcommand{\ees}{\end{subequations}}
\newcommand{\eea}{\end{eqnarray}}
\renewcommand{\epsilon}{\varepsilon}
\DeclareMathOperator{\proj}{proj}
\newcommand{\fourIdx}[5]{%
\setbox1=\hbox{\ensuremath{^{#1}}}%
 \setbox2=\hbox{\ensuremath{_{#2}}}%
 \setbox5=\hbox{\ensuremath{#5}}%
 \hspace{\ifnum\wd1>\wd2\wd1\else\wd2\fi}%
 \ensuremath{\copy5^{\hspace{-\wd1}\hspace{-\wd5}#1\hspace{\wd5}#3}%
 _{\hspace{-\wd2}\hspace{-\wd5}#2\hspace{\wd5}#4}%
 }}
\numberwithin{equation}{section}
\numberwithin{theorem}{section}
\renewcommand{\subset}{\subseteq}
\renewcommand{\supset}{\supseteq}
\renewcommand{\mathrm}{}
\newcommand{\mylabel}[2]{#2\def\@currentlabel{#2}\label{#1}}
\begin{document}

\title{Weak monotone rearrangement on the  line}
\author{J. Backhoff-Veraguas}
\author{M. Beiglb\"ock}
\author{G. Pammer}
\maketitle
\vspace{-30pt}
\begin{abstract}
Weak optimal transport has been recently introduced by Gozlan et al. The original motivation stems from the theory of geometric inequalities; further applications concern numerics of martingale optimal transport and  stability in mathematical finance. 

In this note we provide a complete geometric characterization of the \emph{weak} version of the classical monotone rearrangement between measures on the real line, complementing earlier results of Alfonsi, Corbetta, and Jourdain.  %As a consequence we obtain that arbitrary probability measures admit a Lipschitz coupling in the sense of Kellerer. 
\end{abstract}

\section{Introduction}

Recently, there has been a growing interest in  weak transport problems as introduced by Gozlan et al \cite{GoRoSaTe17}. %, GoRoSaSh18}, see also  \cite{Sh16, Sa17, Sh18, FaSh18, GoJu18}. 
While the original motivation mainly stems from applications to geometric inequalities (cf.\ the works of Marton \cite{Ma96concentration, Ma96contracting} and Talagrand \cite{Ta95, Ta96}), weak transport problems appear also in a number of further topics, including martingale optimal transport \cite{AlCoJo17, AlBoCh18, BeJu17, BaBeHuKa17}, the causal transport problem \cite{BaBeLiZa16,AcBaZa16}, and  stability in math.\ finance \cite{BaBaBeEd19}.

%
%
%This article is concerned with a family of weak transport problems on $\R^d$, $d\in\N$, which were introduced by Gozlan et.\ al in \cite{GoRoSaTe17}. Various authors engaged with the problem and observed the exceptional structure of its solutions, see \cite{AlCoJo17,GoJu18,GoRoSaShTe18,BaBePa18}. The reader can find a detailed explanation of required notation and definitions in Section~\ref{sec:notation} and Section~\ref{sec:definitions}, respectively.

%{\color{red}JB:Could it make sense to split the intro into one-dimensional and higher-dimensional results? Right now we seem to go back and forth. Also, the title suggests that stability will only hold in 1-d}
\subsection{Framework and main results} \label{sec:main results}
Write $\Pi(\mu,\nu)$ for  the set of couplings between $\mu,\nu\in\mathcal P(\R^d)$. Starting with the seminal article of Gangbo-McCann \cite{GaMc96} 
 problems of the form
\begin{align}\label{ClassicOT}\textstyle 
W_\theta(\mu,\nu) := \inf_{\pi\in\Pi(\mu,\nu)} \int_{\R\times\R} \theta(x-y)\pi(dx,dy),\end{align} where    $\theta\colon\R^d\rightarrow \R$ denotes a convex function 
have received particular attention in optimal transport. The pendant in weak optimal transport consists in 
\begin{align}\label{eq:GJ2}\textstyle 
	V_\theta(\mu,\nu) :=   \inf_{\mu^* \leq_c \nu}  W_\theta(\mu,\mu^*).
	\end{align}
 Here $\leq_c$ denotes the convex order, i.e.\ $\mu \leq_c \nu$ iff $\int \phi\, d\mu \leq \int \phi\, d\nu$ for all convex  $\phi:\R^d \to \R$.  %The equality between  \eqref{eq:GJ} and \eqref{eq:GJ2} is straightforward, we come back to this in Theorem 08.15 below.  

The problem \eqref{eq:GJ2}, and in particular its one dimensional version,  is investigated in \cite{AlCoJo17, GoRoSaTe17, GoRoSaSh18, Sh16, Sa17, Sh18, FaSh18, GoJu18,  BaBePa18,BaBaBeEd19}. The main purpose of this note is to give a complete geometric characterization of the optimizer $\mu^*$ %and the optimal coupling between $\mu$ and $\mu^*$ 
in one dimension.
\begin{definition}\label{def:T properties}
	Fix $\mu,\nu\in\mathcal P_1(\R)$. We call a function $S\colon\R\rightarrow \R$ admissible if it satisfies
	\begin{enumerate}[label=(\roman*)]
		\item \label{property 1}$S$ is  increasing,
		\item \label{property 2}$S$ is  1-Lipschitz,
		\item \label{property 3}$S(\mu)\leq_c \nu$.
	\end{enumerate}
\end{definition}

%Our first main result is:

\begin{theorem}\label{thm:equivalence}
Let $\mu,\nu\in\mathcal P_1(\R)$. There exists an admissible $T$ ($\mu$-a.s.\ unique) 
which is maximal in the sense that $S(\mu) \leq_c T(\mu)$ for every other admissible $S$. 

 If  $\theta\colon\R\rightarrow \R$ is  convex  then  $\mu^*:=T(\mu)$ is an optimizer of   \eqref{eq:GJ2}. 
	If  $\theta\colon\R\rightarrow \R$ is strictly convex and $V_\theta(\mu,\nu)$ is finite, $T(\mu)$ is the unique optimizer of   \eqref{eq:GJ2}. 
\end{theorem}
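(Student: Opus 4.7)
The plan is to construct the maximal admissible $T$ via an obstacle problem in quantile space, then show that $T(\mu)$ minimizes $W_\theta$ for convex $\theta$ using a convex-order argument, with uniqueness following from strict convexity.

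Denote by $Q_\mu, Q_\nu : [0,1] \to \R$ the quantile functions of $\mu, \nu$ and set $J(u) := \int_0^u Q_\mu(s)\,ds$, $I(u) := \int_0^u Q_\nu(s)\,ds$. An admissible $T$ is determined $\mu$-a.s.\ by $G := T \circ Q_\mu$, the quantile function of $T(\mu)$. Using the quantile-function characterization of convex order for measures in $\mathcal P_1(\R)$, admissibility is equivalent to: $G$ and $Q_\mu - G$ are both non-decreasing on $[0,1]$, and $F(u) := \int_0^u G(s)\,ds$ satisfies $F(0) = 0$, $F(1) = I(1)$, $F \geq I$. Equivalently, both $F$ and $J - F$ are convex with those boundary values and $F \geq I$. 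Maximality of $T(\mu)$ in convex order amounts to pointwise minimality of $F$. The family $A$ of admissible $F$'s is non-empty (the chord $u \mapsto u\,I(1)$ lies in $A$, since $I$ is convex), uniformly bounded by $I \leq F \leq u\,I(1)$, and closed under pointwise limits of monotone-decreasing sequences (convexity of $F$ and of $J-F$ are preserved under such limits). Combined with local-Lipschitz control on compact subsets of $(0,1)$ and Arzel\`a--Ascoli, one obtains existence of a pointwise-smallest $F^* \in A$ via an obstacle-problem/free-boundary argument: for $F_1,F_2\in A$ one constructs explicitly an admissible $F_3 \leq \min(F_1,F_2)$ by a ``taut string'' interpolation, and Zorn's lemma then yields the smallest element. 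Setting $T \circ Q_\mu := (F^*)'$ (right-derivative) and extending to a $1$-Lipschitz increasing function on $\R$ gives the sought map $T$, which is $\mu$-a.s.\ unique.

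For convex $\theta$, classical one-dimensional optimal transport gives $W_\theta(\mu, \mu^*) = \int_0^1 \theta(Q_\mu(u) - Q_{\mu^*}(u))\,du$. For any admissible $S$ let $a_T := Q_\mu - Q_{T(\mu)}$ and $a_S := Q_\mu - Q_{S(\mu)}$. The $1$-Lipschitz property of $T$ makes $a_T$ non-decreasing, and maximality of $T(\mu)$ in convex order gives $\int_0^u (a_T - a_S)\,du \geq 0$ for $u \in [0,1]$, with equality at the endpoints. The tangent inequality for convex $\theta$ combined with integration by parts (using monotonicity of $\theta'(a_T)$) yields
\[ \int_0^1 \bigl[\theta(a_S) - \theta(a_T)\bigr]\,du \geq \int_0^1 \theta'(a_T)(a_S - a_T)\,du \geq 0, \]
so $W_\theta(\mu, T(\mu)) \leq W_\theta(\mu, S(\mu))$. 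To pass from admissible $S$ to arbitrary $\mu^* \leq_c \nu$, one invokes the result of Alfonsi, Corbetta, and Jourdain (or argues it directly via a $1$-Lipschitz rearrangement) that every optimizer of $V_\theta$ for convex $\theta$ takes the form $S(\mu)$ for some admissible $S$.

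For part three, strict convexity of $\theta$ makes the tangent inequality above strict whenever $a_S \neq a_T$ on a set of positive Lebesgue measure, forcing $S(\mu) = T(\mu)$ and thus uniqueness of the optimizer when $V_\theta(\mu,\nu) < \infty$. The main obstacle in this plan is the reduction step in part two, namely showing that the infimum over all $\mu^* \leq_c \nu$ is attained at a $1$-Lipschitz increasing pushforward of $\mu$; the obstacle-problem construction of $F^*$ is more technical but essentially routine within the convex-order framework.
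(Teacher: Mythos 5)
Your approach is genuinely different from the paper's. You work in quantile space, encoding admissibility via the integrated quantile $F(u)=\int_0^u Q_{T(\mu)}$ and the constraints $F,\, J-F$ convex, $F\ge I$, and you obtain the maximal $T$ as the pointwise-smallest $F$ via a lattice/Zorn argument; optimality then follows from the Hoeffding--Fr\'echet formula $W_\theta(\mu,S(\mu))=\int_0^1\theta(Q_\mu-Q_{S(\mu)})\,du$, the monotonicity of $a_T=Q_\mu-Q_{T(\mu)}$ (coming from the $1$-Lipschitz property), and a tangent-inequality plus integration-by-parts computation. The paper instead derives existence of the distinguished map from $C$-monotonicity of an optimizer (Theorem~\ref{thm:optimality implies fitting}), proves sufficiency via duality (Theorem~\ref{thm:fitting implies optimality}), and obtains maximality from the lattice Lemma~\ref{lem:maximum of maps} together with Lemma~\ref{lem:maximum minimizes id-map}. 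Your directedness/``taut string'' step (given admissible $F_1,F_2$, produce admissible $F_3\le\min(F_1,F_2)$) is asserted rather than proved, but it is essentially the content of Lemma~\ref{lem:maximum of maps} translated into quantile coordinates, so I regard it as a gap in detail rather than in principle. Your monotone-rearrangement computation showing $\int\theta(a_S)-\int\theta(a_T)\ge\int A\,d\bigl(\theta'\circ a_T\bigr)\ge 0$ with $A=\int_0^\cdot(a_T-a_S)\ge 0$ is a clean alternative to the paper's Lemma~\ref{lem:maximum minimizes id-map} (which approximates by uniform atoms and invokes a majorization criterion), and buys a more explicit, analytic argument.

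There is, however, a genuine gap, which you have correctly identified: your comparison only shows $W_\theta(\mu,T(\mu))\le W_\theta(\mu,S(\mu))$ for \emph{admissible} $S$, whereas Theorem~\ref{thm:equivalence} requires minimality over \emph{all} $\mu^*\leq_c\nu$. The missing step is precisely the assertion that an optimizer $\mu^*$ of \eqref{eq:GJ2} is of the form $S(\mu)$ for an increasing $1$-Lipschitz $S$ --- invoking Alfonsi--Corbetta--Jourdain here imports the hard part of the theorem rather than proving it, and the parenthetical ``argue it directly via a $1$-Lipschitz rearrangement'' is not a proof: the increasing optimal map $R$ with $R(\mu)=\mu^*$ given by Hoeffding--Fr\'echet need not be $1$-Lipschitz, and rearranging $Q_\mu-Q_{\mu^*}$ to be monotone destroys the constraint that the pushforward remain $\leq_c\nu$. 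This is exactly what the paper's $C$-monotonicity argument in the proof of Theorem~\ref{thm:optimality implies fitting} supplies: from the variational inequality \eqref{eq:monotonicity} one derives $\bigl(x-T(x)-y+T(y)\bigr)\bigl(T(x)-T(y)\bigr)\ge 0$, hence $|T(x)-T(y)|\le|x-y|$, i.e.\ the barycenter map of any optimal coupling is automatically increasing and $1$-Lipschitz. Until you close this step (for instance by reproducing that $C$-monotonicity argument, or by a direct variational contraction argument in quantile space), the optimality statement of Theorem~\ref{thm:equivalence} is not established by your proof.
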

We call the ($\mu$-a.s.\ unique) map $T$ in Theorem \ref{thm:equivalence}  the \emph{weak monotone rearrangement}.

A particular consequence of Theorem \ref{thm:equivalence} is that the optimizer of  \eqref{eq:GJ2} does not depend on the choice of the convex function $\theta$. We find this fact non-trivial as well as  remarkable and highlight that it is not new: different independent proofs were given by Gozlan et al \cite{GoRoSaTe17}, Alfonsi, Corbetta, Jourdain \cite{AlCoJo17} and Shu \cite{Sh18}. Alfonsi, Corbetta and Jourdain \cite[Example 2.4]{AlCoJo19} notice that this does not pertain in higher dimensions.

%Our next goal is to provide a finer description of the mapping $T$. To this end, we recall the notion of irreducible decomposition: 

The map $T$ can be explicitly characterized in geometric terms  using the notion of irreducibility introduced in \cite{BeJu16}: Measures $\eta, \nu\in P_1(\R)$ are in convex order iff 
\begin{align}\textstyle u_\eta(y) := \int_{\R} |x-y| \eta(dx) \leq \int_{\R} |x-y|\nu(dx) =: u_\nu(y),\end{align}
and, by continuity, the set $U$ where this inequality is strict is open. Hence $U=\bigcup_n I_n$, where $(I_n)$ is an at most countable family of disjoint open intervals; these intervals $I_n$ are called \emph{irreducible} with respect to $(\eta, \nu)$.
\begin{theorem}\label{thm:wmr complementary}
	 The weak monotone rearrangement $T$ of $\mu,\nu\in\mathcal P_1(\R)$ is the unique admissible map which has slope 1 on each interval $T^{-1}(I)$,  where $I$ is irreducible wrt $(T(\mu), \nu)$.
\end{theorem}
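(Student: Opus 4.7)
The plan is to split the statement into \emph{existence} --- that the maximal $T$ from Theorem~\ref{thm:equivalence} has slope $1$ on $T^{-1}(I)$ for every $I$ irreducible wrt $(T(\mu),\nu)$ --- and \emph{uniqueness} --- that any admissible $S$ with slope $1$ on each $S^{-1}(I)$ (for $I$ irreducible wrt $(S(\mu),\nu)$) coincides with $T$ $\mu$-a.s.

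\emph{Existence} would follow by contradicting maximality through a variational perturbation. If $T'\leq 1-\delta$ on a subset of $T^{-1}(I)$ of positive $\mu$-mass for some irreducible $I=(a,b)$, then $u_\nu-u_{T(\mu)}$ is continuous, vanishes at $a,b$ and is strictly positive on $(a,b)$, so it dominates some $\eta>0$ on a compact subinterval $[a',b']\subset(a,b)$. Inside $T^{-1}([a',b'])\cap\{T'\leq 1-\delta\}$ I would pick two disjoint subintervals $J_1<J_2$ of positive $\mu$-mass with $T(J_1)$ strictly left of $T(J_2)$ and define $\tilde T:=T+\varepsilon\phi$ with $\phi$ piecewise-affine, supported in $J_1\cup J_2$, negative on $J_1$, positive on $J_2$, and normalized so that $\int\phi\,d\mu=0$. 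The slack $\delta$ keeps $\tilde T$ increasing and $1$-Lipschitz for small $\varepsilon$; the signed perturbation of measures has zero mass and zero first moment and is supported inside $[a',b']$, so $u_{\tilde T(\mu)}$ agrees with $u_{T(\mu)}$ off $[a',b']$ and differs by $O(\varepsilon)<\eta$ inside, hence $\tilde T(\mu)\leq_c\nu$; finally, at any $y_0\in(\max T(J_1),\min T(J_2))$ one computes $u_{\tilde T(\mu)}(y_0)-u_{T(\mu)}(y_0)=2\varepsilon\int_{J_2}\phi\,d\mu>0$, forbidding $\tilde T(\mu)\leq_c T(\mu)$ and contradicting the maximality of $T$.

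\emph{Uniqueness} relies on a contraction lemma: for any increasing $1$-Lipschitz $R:\R\to\R$ and probability $\eta$, one has $R(\eta)\leq_c\eta+c$ where $c:=\int R\,d\eta-\int x\,d\eta$ matches the means; this follows by combining Chebyshev's sum (correlation) inequality, applied to the non-decreasing pair $f(x):=x+c-R(x)$ (which satisfies $\int f\,d\eta=0$) and $\phi'\circ R$, with the subgradient bound $\phi(R+f)\geq\phi(R)+f\phi'(R)$ for convex $\phi$. Given $S$ admissible with slope $1$ on each $S^{-1}(I^S_n)$, maximality of $T$ gives $u_{S(\mu)}\leq u_{T(\mu)}\leq u_\nu$ with equality at the endpoints of every $I^S_n$; the identity $u_\alpha(y)=2\int(y-x)^+\,d\alpha+m_\alpha-y$ then implies that $T(\mu)|_{I^S_n}$, $S(\mu)|_{I^S_n}$ and $\nu|_{I^S_n}$ share mass and first moment. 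Monotonicity forces $T^{-1}(I^S_n)$ and $S^{-1}(I^S_n)$ to coincide $\mu$-a.s.\ as some interval $(p,q)$ with a common mean-matching shift $c$. Since $S(x)=x+c$ on $(p,q)$, we get $S(\mu)|_{I^S_n}=\mu|_{(p,q)}+c$, while the lemma applied to $T|_{(p,q)}$ yields $T(\mu)|_{I^S_n}\leq_c\mu|_{(p,q)}+c=S(\mu)|_{I^S_n}$. Gluing across $n$, and using $T(\mu)=\nu=S(\mu)$ on the complement of $\bigcup_n I^S_n$, yields $T(\mu)\leq_c S(\mu)$; with the reverse inequality from maximality this forces $T(\mu)=S(\mu)$, and then $S=T$ $\mu$-a.s.\ by uniqueness of the increasing rearrangement.

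The main obstacle is the variational step in \emph{existence}: producing $\tilde T$ that stays $1$-Lipschitz increasing, is mass- and mean-neutral, and strictly spreads $T(\mu)$. The construction is particularly delicate when $\mu$ has atoms or when $\{T'<1\}\cap T^{-1}(I)$ is irregularly distributed; in such cases one may prefer to build the perturbation in quantile coordinates on $[0,1]$.
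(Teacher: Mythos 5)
Your proposal is structurally quite different from the paper's proof, so it is worth comparing them.

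\textbf{Comparison of approaches.} The paper does not establish the slope-$1$ property of the maximal $T$ by perturbing a maximizer. Instead, the slope-$1$ property appears first, in Theorem~\ref{thm:optimality implies fitting}, as a consequence of $C$-monotonicity of an optimizer of \eqref{eq:GJ} for strictly convex $\theta$ (Lemmas~\ref{lem:MGintersections} and~\ref{lem:competitor} supply the competitor pair). Maximality is then derived from this $T$ via the lattice structure of convex-order maxima of admissible pushforwards: Lemma~\ref{lem:maximum of maps} shows $S(\mu)\vee T(\mu)$ is again $R(\mu)$ for admissible $R$, Lemma~\ref{lem:maximum minimizes id-map} gives $(\mathrm{id}-R)(\mu)\leq_c(\mathrm{id}-T)(\mu)$, so $R$ is also optimal and then $R=T$ $\mu$-a.s.\ by uniqueness of the WOT optimizer; thus $S(\mu)\leq_c T(\mu)$. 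Uniqueness follows because any admissible $S$ with the slope-$1$ property is optimal by Theorem~\ref{thm:fitting implies optimality}, hence equals $T$ by uniqueness. You run the argument in the opposite direction: you take maximality as given, extract slope $1$ by a variational perturbation, and prove uniqueness directly with a contraction lemma for $1$-Lipschitz maps, proved by Chebyshev's correlation inequality (which is, incidentally, an elegant alternative proof of the first half of Lemma~\ref{lem:maximum minimizes id-map} for a single measure). What you gain is a proof that bypasses both the optimization problem \eqref{eq:GJ} and $C$-monotonicity entirely; what you lose is the modularity of the paper (which also needs Theorems~\ref{thm:optimality implies fitting} and~\ref{thm:fitting implies optimality} elsewhere) and you pick up some delicate technical work.

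\textbf{Two genuine gaps.} First, you invoke the maximal $T$ from Theorem~\ref{thm:equivalence}, but in the paper that theorem is proved \emph{from} Theorem~\ref{thm:wmr complementary}, so as written your argument is circular. This is repairable: the set $\{S(\mu):S\in M(\mu,\nu)\}$ is nonempty (it contains $\delta_{\mathrm{bary}(\nu)}$), bounded above by $\nu$ in convex order, stable under pairwise maxima by Lemma~\ref{lem:maximum of maps}, and closed (Arzel\`a--Ascoli for increasing $1$-Lipschitz maps), so a separability argument yields an admissible $T$ achieving the convex-order supremum. You should make this explicit. Second, the variational step is not yet a proof. The negation of ``$T$ has slope $1$ on $T^{-1}(I)$ $\mu$-a.s.'' is the existence of $x<y$ in $\supp\mu\cap T^{-1}(I)$ with $T(y)-T(x)<y-x$; it is \emph{not} equivalent to ``$T'\leq 1-\delta$ on a set of positive $\mu$-mass,'' because $T'$ can be $<1$ only on a $\mu$-null (Lebesgue-positive) set. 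Moreover, even when such a $\delta$ exists, making $\tilde T=T+\varepsilon\phi$ both increasing and $1$-Lipschitz requires $T'$ to be bounded \emph{below} away from $0$ where $\phi'<0$; if $T$ is flat on part of $J_1$ the perturbation destroys monotonicity, and if $\mu$ has atoms the ``support in $J_1\cup J_2$'' picture is not the right one. You flag this yourself and suggest working in quantile coordinates; that is indeed the natural fix (perturb the quantile function of $T(\mu)$ within the quantile interval corresponding to $T^{-1}(I)$, preserving monotonicity, the $1$-Lipschitz constraint in quantile form, mass, mean and the strict slack $u_\nu-u_{T(\mu)}>\eta$), but as the argument stands the construction is not established. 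Similarly, in the uniqueness step the claim that $T^{-1}(I^S_n)$ and $S^{-1}(I^S_n)$ coincide $\mu$-a.s.\ needs care at endpoints of $I^S_n$ that are accumulation points of other irreducible intervals, where ``$u_{S(\mu)}=u_\nu$ on a left neighbourhood'' may fail; one should argue via the one-sided derivatives $\partial_\pm u=2F(\cdot\mp)-1$ and the sandwich $u_{S(\mu)}\leq u_{T(\mu)}\leq u_\nu$ rather than via a neighbourhood.
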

%\begin{theorem}\label{thm:wmr complementary2}
%	For an admissible $T$ between $\mu,\nu\in\mathcal P_1(\R)$ the following are equivalent: 
%\begin{enumerate} 
%\item $T$ is  the weak montone rearrangement. 
%\item $T$ has slope 1 on each interval $T^{-1}(I)$  where $I$ is irreducible wrt $(T(\mu), \nu)$.
%\item $T$ has slope 1 on each irreducible $I$ wrt $(\mu, T^{-1}\nu)$. 
%\end{enumerate}
%\end{theorem}
Theorem \ref{thm:wmr complementary} represents a necessary and sufficient condition for  the optimality  of the measure $T(\mu^*)$ in \eqref{eq:GJ2}. We note that the `necessary' part was first obtained (using somewhat different phrasing) by Alfonsi, Corbetta, and Jourdain \cite[Proposition 3.12]{AlCoJo17}. We also refer the reader to the semi-explicit representation of $T$ and $T(\mu^*)$ given  in \cite{AlCoJo17}.
\vspace{-50mm}

 %\vspace{-35mm}
 
 \begin{figure}[H]
    \centering
    \includegraphics[page=1,width=1\textwidth]
        {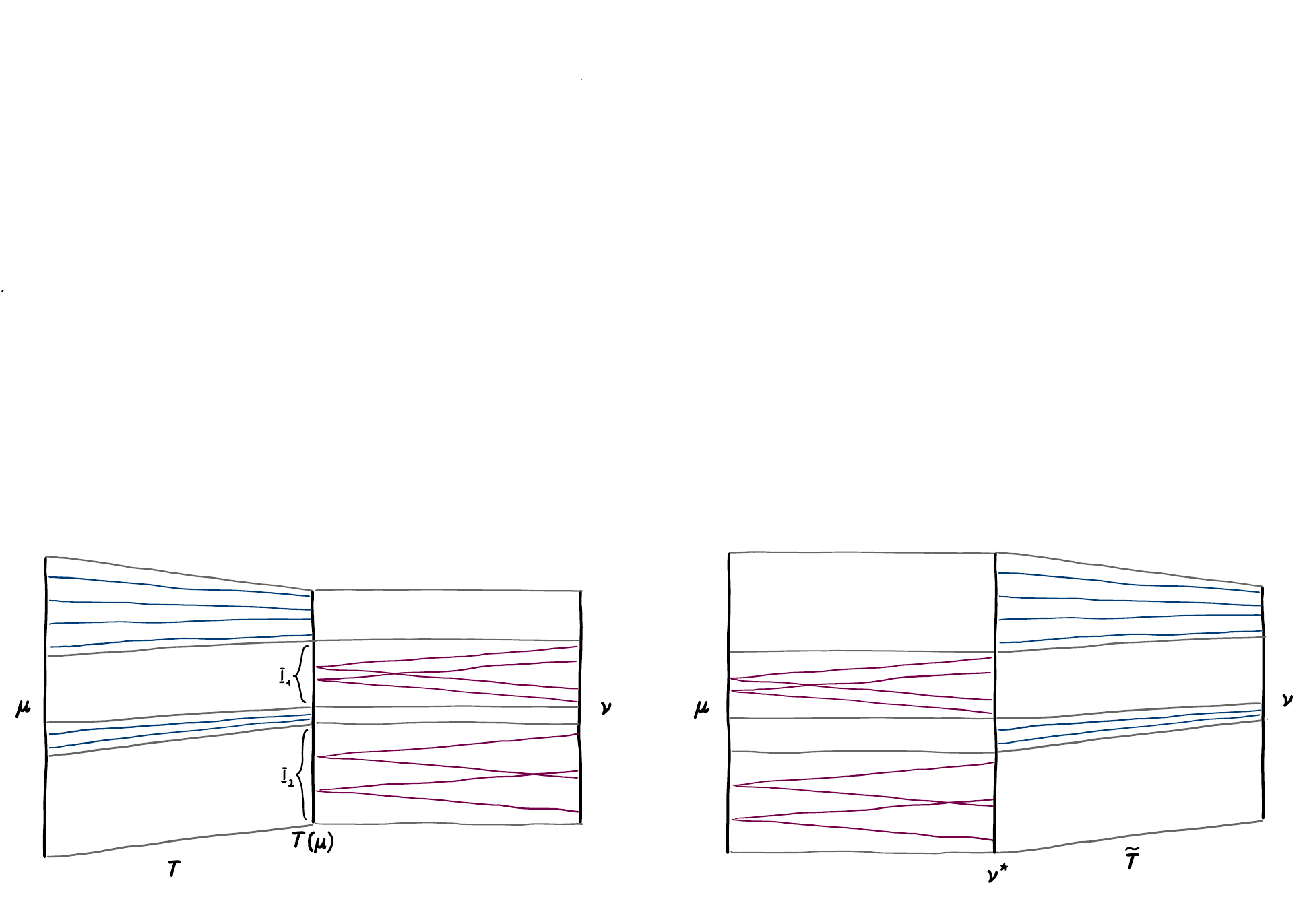} 
  \caption{The solutions to \eqref{eq:GJ2}/\eqref{eq:GJ} and \eqref{eq:GJ3}, respectively. Blue lines depict contractive parts of the map, purple lines depict areas with (non trivial) martingale transport.}
  \label{fig:wmr}
 %\label{fig:usual.wasserstein}
 \end{figure}

\subsection{Connection with martingale transport plans}

Intuitively, the irreducible intervals of $(\eta, \nu)$ are the components where we need to `expand' $\eta$ in order to transform it into $\nu$. In this sense Theorem \ref{thm:wmr complementary} asserts that the mass of $\mu$ can \emph{either} concentrate between  $\mu$ and $T(\mu)$, \emph{or} it can expanded between $T(\mu)$ and $\nu$ (see Figure \ref{fig:wmr}). 

%However the formulation \eqref{eq:GJ} can also be used to give a significantly  refined description of the weak monotone rearrangement. We first introduce a particular irreducible decomposition of probability measures:

To make this precise, we recall  from \cite{GoRoSaTe17} that \eqref{eq:GJ2} can be reformulated as
\begin{align}\label{eq:GJ}\textstyle 
	V_\theta(\mu,\nu) =  \inf_{\pi\in\Pi(\mu,\nu)} \int_{\R^d} \theta\Big(x-\int_{\R^d} y \pi_x(dy)\Big)\mu(dx),
\end{align}
where $(\pi_x)_{x\in\R^d}$ denotes a regular disintegration of the coupling $\pi$ wrt its first marginal $\mu$.

The set of optimizers of \eqref{eq:GJ} is also straightforward to express in terms of $T$: 
 %Provided $V_\theta(\mu,\nu)$ is finite,  
Write $\Pi_M(\eta, \nu)$ for the set of   \emph{martingale couplings} (or martingale transport plans), i.e.\ $\pi\in \Pi(\eta, \nu) $ which satisfy barycenter$(\pi_x)=x$, $\eta$-a.s. %where $(\pi_x)$ denotes a regular disintegration of $\pi $ wrt $\eta$). 
By 
Strassen's theorem  $\Pi_M(\eta, \nu)$ is nonempty iff $\eta \leq_c \nu$. Using this notation,  $\pi\in \Pi(\mu,\nu)$ is optimal iff there exists a martingale coupling  $\pi^M\in \Pi_M(T(\mu), \nu)$ such that $\pi$ is the concatenation of the transports described by $T$ and $\pi^M$:
\begin{align} \textstyle
\pi(A\times B)= \int_A\mu(dx)\pi_{T(x)}^M(B).
\end{align}
Any  $\pi^M \in \Pi_M(\eta, \nu)$ can be decomposed based on the family of irreducible intervals $(I_n)_n$: denoting $F:=(\cup_n I_n)^c $ by \cite[Appendix A]{BeJu16} we have
\begin{align}\label{DecomposingMM}\textstyle  \pi^M= \sum_n \pi^M_{|I_n \times \bar I_n} + (\mbox{Id},\mbox{Id})(\mu_{|F}).\end{align}
Plainly, \eqref{DecomposingMM} asserts that any martingale transport plan can move mass only within the individual irreducible intervals, whereas particles $x\in F$ have to stay put.

%\FloatBarrier

\FloatBarrier

\subsection{A reverse problem}

Alfonsi, Corbetta, and Jourdain \cite{AlCoJo17} proved that the same  value  is obtained when reversing the order of transport  and convex order relaxation in \eqref{eq:GJ2}, i.e.
\begin{align}\label{eq:GJ3} \textstyle 
	V_\theta(\mu,\nu) =   \inf_{\mu \leq_c \nu^*}  W_\theta(\nu^*, \nu);
	\end{align}
	moreover they find (\cite[Proposition 3.12]{AlCoJo17}) a  monotone mapping which is optimal between the optimizer of \eqref{eq:GJ3} and $\nu$ as well as  between $\mu$ and the optimizer of \eqref{eq:GJ2}.
	
As a counterpart to Theorems \ref{thm:equivalence} and \ref{thm:wmr complementary} we establish the following result, strengthening the connection between \eqref{eq:GJ2} and \eqref{eq:GJ3}.
\begin{theorem}\label{thm:equivalence2}
Let $\mu,\nu\in\mathcal P_1(\R)$. 
Then there exists a unique $\leq_c$-smallest measure $\nu^*$, $\mu\leq_c \nu^*$, which can be pushed onto $\nu$ by an increasing 1-Lipschitz mapping. Moreover we have
\begin{enumerate}
\item
 If  $\theta\colon\R\rightarrow \R$ is  convex  then  $\nu^*$ is an optimizer of   \eqref{eq:GJ3}. 
	If  $\theta\colon\R\rightarrow \R$ is strictly convex and $V_\theta(\mu,\nu)$ is finite, $\nu^*$ is the unique optimizer of   \eqref{eq:GJ3}. 
	
\item	There exists a ($\nu^*$-unique) increasing 1-Lipschitz mapping $\tilde T$ which pushes $\nu^*$ onto $\nu$; $\tilde T$ has slope 1 on each interval $I$,  where $I$ is irreducible wrt $(\mu, \nu^*)$.
	
\item	$\tilde T$ is the weak monotone rearrangement between $\mu$ and $\nu$.
\end{enumerate}
\end{theorem}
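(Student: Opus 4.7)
My approach is to construct $\nu^*$ and $\tilde T$ from the forward weak monotone rearrangement of $(\mu,\nu)$ and to verify the claims in turn, with the $\leq_c$-minimality being the main technical point. Let $T$ be that forward rearrangement (Theorem~\ref{thm:equivalence}), set $\mu^*:=T_\#\mu$, and let $(I_n)_n$ be the irreducible intervals of $(\mu^*,\nu)$; by Theorem~\ref{thm:wmr complementary} the map $T$ is a translation by a constant $c_n$ on each $J_n:=T^{-1}(I_n)$. I would set $\tilde T:=T$ and define
\[
\nu^*:=\mu\lsub{\R\setminus\bigcup_n J_n}+\sum_n\tau_n,
\]
where $\tau_n$ is the $(-c_n)$-translate of $\nu\lsub{\bar I_n}$ (boundary atoms being handled consistently with~\eqref{DecomposingMM}). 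Then $\tilde T_\#\nu^*=\nu$ by construction, and $\mu\leq_c\nu^*$ follows restriction-wise: on the complement $\mu=\nu^*$, while on each $\bar J_n$ the relation is a translate of $\mu^*\lsub{\bar I_n}\leq_c\nu\lsub{\bar I_n}$, which is built into irreducibility. A short potential computation then yields
\[
u_{\nu^*}(y)-u_\mu(y)=(u_\nu-u_{\mu^*})(T(y))\quad\text{for } y\in\bar J_n,
\]
while $u_{\nu^*}-u_\mu$ vanishes on the complement. Hence the irreducible intervals of $(\mu,\nu^*)$ are precisely the $J_n$'s, on which $\tilde T$ has slope $1$, establishing parts (2) and (3).

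For part (1), the $\tilde T$-induced coupling between $\nu^*$ and $\nu$ has cost
\[
\int\theta(y-\tilde T(y))\,d\nu^*(y)=\int\theta(x-T(x))\,d\mu(x)=V_\theta(\mu,\nu),
\]
because $y-\tilde T(y)=-c_n$ is constant on $\bar J_n$ (matching the $J_n$-contribution of the forward cost) and $\nu^*=\mu$, $\tilde T=T$ on the complement. Combined with feasibility this gives optimality of $\nu^*$, and uniqueness for strictly convex $\theta$ follows in parallel with Theorem~\ref{thm:equivalence}, using the equivalence of~\eqref{eq:GJ2} and~\eqref{eq:GJ3} from~\cite{AlCoJo17}.

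The main obstacle is the $\leq_c$-minimality. Given any feasible $\sigma$ (with $\mu\leq_c\sigma$ and $S_\#\sigma=\nu$ for some increasing $1$-Lipschitz $S$), we want $u_{\nu^*}\leq u_\sigma$. Outside $\bigcup_n J_n$ this is immediate from $\mu\leq_c\sigma$; inside each $J_n$, the identity above reduces the goal to $u_\sigma(y)-u_\mu(y)\geq(u_\nu-u_{\mu^*})(T(y))$. The available inputs are $u_\sigma\geq u_\mu$ and $u_\sigma(y)\geq u_\nu(S(y))=\int|S(y)-S(x)|\,d\sigma(x)$ (the latter from $S$ being $1$-Lipschitz), but $S(y)$ and $T(y)$ typically differ, so no pointwise combination suffices. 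I would therefore decompose $\sigma$ along the fibres $S^{-1}(\bar I_n)$ and prove the piecewise convex orders $\nu^*\lsub{\bar J_n}\leq_c\sigma\lsub{S^{-1}(\bar I_n)}$ (with matching mass $\nu(\bar I_n)$ and means inherited from $\mu\leq_c\sigma$), together with the trivial comparison on the complementary piece; summing then yields the global inequality. Equivalently, one may seek a martingale kernel $\nu^*\to\sigma$ via a quantile pairing synchronized through $\nu$. Making this comparison rigorous is what I expect to be the core technical step.
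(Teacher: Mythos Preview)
Your construction of $\nu^*$ and $\tilde T$, the verification that the irreducible intervals of $(\mu,\nu^*)$ are the $J_n$'s, and the cost identity yielding optimality in part~(1) all match the paper's argument essentially verbatim. The divergence is in the $\leq_c$-minimality, where your proposal is incomplete and the sketched route does not work.

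Concretely, your piecewise comparison $\nu^*\lsub{\bar J_n}\leq_c\sigma\lsub{S^{-1}(\bar I_n)}$ fails because the two pieces need not share the same mean (convex order forces equal barycenters). Take $\mu=\tfrac12\delta_0+\tfrac12\delta_4$, $\nu=\tfrac14(\delta_{-1}+\delta_1+\delta_3+\delta_5)$, so $T=\mathrm{id}$, $\nu^*=\nu$, and $J_1=I_1=(-1,1)$. Now let $\sigma=\tfrac14(\delta_{-2}+\delta_0+\delta_4+\delta_6)$ with $S(-2)=-1$, $S(0)=1$, $S(4)=3$, $S(6)=5$; then $\mu\leq_c\sigma$, $S$ is increasing $1$-Lipschitz, and $S_\#\sigma=\nu$, yet $\sigma\lsub{S^{-1}(\bar I_1)}=\tfrac14(\delta_{-2}+\delta_0)$ has barycenter $-\tfrac12$ while $\nu^*\lsub{\bar J_1}=\tfrac14(\delta_{-1}+\delta_1)$ has barycenter $0$. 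So the local convex order is simply false; summing local comparisons cannot give the global one, and the ``means inherited from $\mu\leq_c\sigma$'' heuristic does not survive.

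The paper avoids this entirely by arguing \emph{indirectly}. First it proves uniqueness of the optimizer of \eqref{eq:GJ3} for strictly convex $\theta$ (via the Jensen equality case, showing that any optimal $\eta$ forces the optimal map $R$ to have slope~$1$ on the irreducible components of $(\mu,\eta)$, hence $\eta=\nu^*$). Only then does it deduce minimality, using two lattice-type lemmas: Lemma~\ref{lem:minimum of maps} shows that the convex-order minimum of two measures admitting increasing $1$-Lipschitz pushforwards onto $\nu$ again admits one, reducing to the case $\mu\leq_c\eta\leq_c\nu^*$; and Lemma~\ref{lem:maximum minimizes id-map} then gives $(\mathrm{id}-S)(\eta)\leq_c(\mathrm{id}-\tilde T)(\nu^*)$, whence $\int\theta(y-S(y))\,d\eta\leq\int\theta(y-\tilde T(y))\,d\nu^*=V_\theta(\mu,\nu)$, so $\eta$ is optimal and therefore equals $\nu^*$ by uniqueness. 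The point is that the minimality is obtained as a corollary of uniqueness plus these two order-theoretic lemmas, not by a direct potential comparison.
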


%\begin{theorem}\label{thm:equivalence2}
%Let $\mu,\nu\in\mathcal P_1(\R)$. 
%Then there exists a unique $\leq_c$-smallest measure $\nu^*$, $\mu\leq_c \nu^*$, which can be pushed onto $\nu$ by an increasing 1-Lipschitz mapping. Moreover we have
%\begin{enumerate}
%\item
% If  $\theta\colon\R\rightarrow \R$ is  convex  then  $\nu^*$ is an optimizer of   \eqref{eq:GJ3}. 
%	If  $\theta\colon\R\rightarrow \R$ is strictly convex and $V_\theta(\mu,\nu)$ yields a finite value, $\nu^*$ is the unique optimizer of   \eqref{eq:GJ3}. 
%	
%\item	There exists a ($\nu^*$-unique) increasing 1-Lipschitz mapping $\tilde T$ which pushes $\nu^*$ onto $\nu$; $\tilde T$ has slope 1 on each interval $I$,  where $I$ is irreducible wrt $(\mu, \nu^*)$.
%	
%\item	$\tilde T$ is the weak monotone rearrangement between $\mu$ and $\nu$.
%\end{enumerate}
%\end{theorem}

%Note that the weak monotone rearrangement is defined only up to 
%
%\begin{theorem}\label{thm:equivalence2}
%Let $\mu,\nu\in\mathcal P_1(\R)$.  If  $\theta\colon\R\rightarrow \R$ is  convex  then  $\nu^*:=T^{-1}(\nu)$ is an optimizer of   \eqref{eq:GJ3}. 
%	If  $\theta\colon\R\rightarrow \R$ is strictly convex and $V_\theta(\mu,\nu)$ yields a finite value, $T^{-1}(\nu)$ is the unique optimizer of   \eqref{eq:GJ2}. 
%	
%	The weak monotone rearrangement $T$ between $\mu,\nu\in\mathcal P_1(\R)$ is the unique increasing, 1-Lipschitz map $S$ such that $\mu \leq S^{-1}(\nu)$ and $S$ 
% has slope 1 on each interval $I$,  where $I$ is irreducible wrt $(\mu, S^{-1}(\nu))$.
%\end{theorem}
As in the previous section, \eqref{eq:GJ3} could be interpreted as a concatenation of a martingale transport with the weak  monotone rearrangement.

\subsection{An  auxiliary result}
We close this introductory section which an auxiliary result that will be important in the proofs of our results. Since it might be of independent interest we provide the $d$-dimensional version. { We denote the topology induced by the $\rho$-Wasserstein distance on the space of probability measures on $\R^d$ by $\mathcal W_\rho$.}
\begin{theorem}[Stability]\label{thm:stability'}
	Let $1\leq \rho<\infty$, $(\mu^k)_{k\in\N}\in\mathcal P_\rho(\R^d)^\N$, $(\nu^k)_{k\in\N}\in\mathcal P_1(\R^d)^\N$ and $\theta\colon\R^d\rightarrow\R$ convex and such that
for some constant $c>0$ it holds
	\begin{align}\label{eq:theta growth condition}
		\theta(x) \leq  c(1 + |x|^\rho)\quad \forall x\in\R^d. 
	\end{align}	
	 If $\mu^k\rightarrow \mu$ in $\mathcal W_\rho$ and $\nu^k\rightarrow \nu$ in $\mathcal W_1$, then 
	$
		 \lim_k V_\theta(\mu^k,\nu^k) = V_\theta(\mu,\nu)$.
	If additionally $\theta$ is strictly convex, we have that
	\begin{enumerate}
		\item $\arg \min_{\eta\leq_c \nu^k} W_\theta(\mu^k,\nu^k) \rightarrow \arg \min_{\eta\leq_c \nu} W_\theta(\mu,\nu)\quad\text{in }\mathcal W_1$,
		\item the sequence of maps $T^k$, where $T^k(\mu)\leq_c \nu$ and $V_\theta(\mu,\nu^k) = W_\theta(\mu,T^k(\mu))$, converges in $\mu$-probability to $T$, where $T(\mu)\leq_c \nu$ and $V_\theta(\mu,\nu) = W_\theta(\mu,\nu)$.
	\end{enumerate}
\end{theorem}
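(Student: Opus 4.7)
The plan is to establish joint continuity of $(\mu,\nu) \mapsto V_\theta(\mu,\nu)$ in $\mathcal W_\rho \times \mathcal W_1$, then upgrade this to convergence of the argmin under strict convexity, and finally leverage Theorem \ref{thm:equivalence} (existence of 1-Lipschitz increasing optimal maps) to pass from convergence of push-forwards to convergence of maps in $\mu$-probability.

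For lower semi-continuity I would fix optimizers $\eta^k \leq_c \nu^k$ of $V_\theta(\mu^k,\nu^k)$, which exist since $\{\eta : \eta \leq_c \nu^k\}$ is $\mathcal W_1$-compact and $W_\theta(\mu^k,\cdot)$ is lower semi-continuous. From $\eta^k \leq_c \nu^k$ and the $\mathcal W_1$-convergence of $(\nu^k)$ the family $(\eta^k)$ is $\mathcal W_1$-precompact with uniformly integrable first moments; extract $\eta^{k_j} \to \eta$ in $\mathcal W_1$. A standard truncation argument shows convex order is preserved under such limits, so $\eta \leq_c \nu$. Joint lower semi-continuity of the classical $W_\theta$ along $(\mu^{k_j},\eta^{k_j}) \to (\mu,\eta)$ in $\mathcal W_\rho \times \mathcal W_1$ — which uses the $\rho$-growth bound on $\theta$ together with the affine minorant coming from convexity — yields $V_\theta(\mu,\nu) \leq W_\theta(\mu,\eta) \leq \liminf_j V_\theta(\mu^{k_j},\nu^{k_j})$.

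For upper semi-continuity I would fix $\eta^\ast$ optimal for $V_\theta(\mu,\nu)$ and build $\eta^{\ast,k} \leq_c \nu^k$ with $\eta^{\ast,k} \to \eta^\ast$ in $\mathcal W_1$; then
\[
V_\theta(\mu^k,\nu^k) \leq W_\theta(\mu^k,\eta^{\ast,k}) \to W_\theta(\mu,\eta^\ast) = V_\theta(\mu,\nu).
\]
This construction is the principal obstacle. My plan is to combine a martingale kernel $\pi \in \Pi_M(\eta^\ast,\nu)$ supplied by Strassen's theorem with an $\mathcal W_1$-optimal coupling of $\nu$ and $\nu^k$, and then enforce the bound $\leq_c \nu^k$ by passing to the shadow of $\eta^\ast$ in $\nu^k$ in the sense of Beiglb\"ock--Juillet, whose $\mathcal W_1$-stability in the dominating marginal furnishes the required convergence. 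In $d = 1$ the construction can be streamlined by directly interpolating the 1-Lipschitz potentials $u_{\nu^k}$.

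The argmin and map convergence then follow formally. Under strict convexity the argmin is unique and exists whenever $V_\theta(\mu,\nu) < \infty$, so any subsequence of $(\eta^k)$ admits a further $\mathcal W_1$-convergent sub-subsequence whose limit is optimal for $(\mu,\nu)$ by the continuity just established; uniqueness forces full $\mathcal W_1$-convergence to $\eta^\ast$. For the maps, Theorem \ref{thm:equivalence} ensures each $T^k$ is increasing and 1-Lipschitz, hence equicontinuous on $\supp\mu$. Arzel\`a--Ascoli extracts, along subsequences, a locally uniform limit $\tilde T$ that is itself increasing, 1-Lipschitz, and satisfies $\tilde T_\#\mu = \eta^\ast \leq_c \nu$; the uniqueness of the admissible maximal map in Theorem \ref{thm:equivalence} forces $\tilde T = T$ $\mu$-a.s., which upgrades to convergence in $\mu$-probability.
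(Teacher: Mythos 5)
Your overall architecture — continuity of the value, then argmin convergence by uniqueness plus compactness, then map convergence — matches the paper's, and your lower semi-continuity argument is sound (the paper outsources it to a cited result, but your self-contained version works). However, there are two genuine gaps.

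\emph{Upper semi-continuity.} The shadow of $\eta^\ast$ in $\nu^k$ is not well-posed here: the shadow $S^{\nu^k}(\eta^\ast)$ requires $\eta^\ast \leq_{c,+} \nu^k$, and since both are probability measures this reduces to $\eta^\ast \leq_c \nu^k$, which is precisely the conclusion you are trying to manufacture — the argument is circular. The paper avoids this with a direct and self-contained construction: take a martingale disintegration $M$ of a coupling in $\Pi_M(\eta^\ast,\nu)$, a $\mathcal W_\rho$-optimal coupling $\pi^k$ of $\nu$ and $\nu^k$, set $R^k(x) = \int\!\!\int z\,\pi^k_y(dz)\,M_x(dy)$, and put $\eta^k := R^k_\#\eta^\ast$. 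Then $\eta^k \leq_c \nu^k$ is automatic (it is a conditional expectation of the $\nu^k$-marginal under the glued triple), and Jensen plus the martingale property give $W_\rho(\eta^\ast,\eta^k)\leq W_\rho(\nu,\nu^k)$. Crucially, this estimate needs $\nu^k\to\nu$ in $\mathcal W_\rho$, whereas the theorem only assumes $\mathcal W_1$. Bridging this gap is the technical heart of the paper's proof and is entirely absent from your proposal: one first approximates $\eta^\ast$ in convex order by finitely supported measures with the same barycenter (Lemmas~\ref{lem:convex order approximation mean} and~\ref{lem:Wtheta approximation}), and for finitely supported $\eta^\ast$ the bound improves to $|x^i - R^k(x^i)| \leq (\min_j \alpha_j)^{-1} W_1(\nu,\nu^k)$ pointwise (Remark~\ref{rem techni}), which does only require $\mathcal W_1$-convergence. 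Without this reduction, your argument only proves the theorem under the stronger hypothesis $\nu^k\to\nu$ in $\mathcal W_\rho$.

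\emph{Map convergence.} You invoke Theorem~\ref{thm:equivalence} to conclude that each $T^k$ is increasing and $1$-Lipschitz, and then apply Arzel\`a--Ascoli. But Theorem~\ref{thm:equivalence} is a one-dimensional statement ($\mu,\nu\in\mathcal P_1(\R)$), whereas Theorem~\ref{thm:stability'} is formulated on $\R^d$. In $d\geq 2$ there is no comparable a priori $1$-Lipschitz regularity for the optimizing map, so the equicontinuity you need for Arzel\`a--Ascoli is not available. The paper instead observes that, with $\mu^k=\mu$ fixed, each $T^k$ is the $W_\theta$-optimal transport map from $\mu$ to $\eta^k$; since $\eta^k\to\eta$ in $\mathcal W_1$, stability of classical optimal transport maps (Villani, Cor.~5.23) gives $T^k\to T$ in $\mu$-probability without any Lipschitz input.
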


\section{\textit{C}-Monotonicity implies geometric characterization}

 In this part we prove the following
\begin{theorem}\label{thm:optimality implies fitting}
	Let $\mu,\nu\in\mathcal P_1(\R)$ and $\theta\colon\R\rightarrow \R$ strictly convex. If $V_\theta(\mu,\nu)$ yields a finite value, for any optimizer $\pi\in\Pi(\mu,\nu)$ of \eqref{eq:GJ}, the map
	$$\textstyle T(x) := \int_\R y\pi_x(dy),$$
	is $\mu$-almost surely uniquely defined { and is independent of the specific coupling $\pi$}{, it is admissible in the sense of Definition \ref{def:T properties}, and it has slope $1$ on $T^{-1}(I)$ if $I$ is an irreducible interval for $(T(\mu),\nu)$.} 
\end{theorem}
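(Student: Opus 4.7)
The four assertions to prove are: (a) $T(\mu) \leq_c \nu$; (b) $T$ is $\mu$-a.s.\ unique and independent of the choice of optimizer; (c) $T$ is increasing and 1-Lipschitz; (d) $T$ has slope 1 on $T^{-1}(I)$ for each irreducible interval $I$ for $(T(\mu), \nu)$. Claim (a) is immediate from Strassen's theorem: each $\pi_x$ has mean $T(x)$, so disintegrating $\pi$ through $T$ yields a martingale kernel from $T(\mu)$ to $\nu$. For (b), if $\pi^1, \pi^2$ are two optimizers with barycenter maps $T^1, T^2$, then $\tfrac12(\pi^1 + \pi^2) \in \Pi(\mu, \nu)$ has barycenter map $\tfrac12(T^1+T^2)$, and strict convexity of $\theta$ applied in \eqref{eq:GJ} yields a strict inequality on $\{T^1 \neq T^2\}$, forcing $T^1 = T^2$ $\mu$-a.s.

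For (c), combining (a) with Jensen's inequality gives $W_\theta(\mu, T(\mu)) \leq \int \theta(x-T(x))\,d\mu(x) = V_\theta(\mu, \nu) \leq W_\theta(\mu, T(\mu))$, so equality holds and $(\id, T)_*\mu$ is the optimal classical $\theta$-coupling between $\mu$ and $T(\mu)$. On $\R$ with strictly convex cost this forces $T$ to be the comonotone rearrangement, hence increasing. For the 1-Lipschitz property I would use a first-order variation: if $x_1 < x_2$ were points with $T(x_2) - T(x_1) > x_2 - x_1$, an infinitesimal swap of mass between $\pi_{x_1}$ and $\pi_{x_2}$ contracts the barycenters towards each other and produces a coupling in $\Pi(\mu, \nu)$ whose \eqref{eq:GJ}-cost has first-order derivative proportional to $\theta'(x_2 - T(x_2)) - \theta'(x_1 - T(x_1)) < 0$ by strict convexity, contradicting optimality. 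The degenerate case where some $\pi_{x_i}$ is a Dirac can be handled by a direct finite swap with the same sign analysis.

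For (d), the additional ingredient is the convex-order slack on the irreducible interval. Suppose $T$ failed to have slope 1 on a subset of $T^{-1}(I)$ of positive $\mu$-measure; by monotonicity I can select $x_1 < x_2$ in $T^{-1}(I)$ with $T(x_2) - T(x_1) < x_2 - x_1$, and small disjoint neighborhoods $A_1 \ni x_1$, $A_2 \ni x_2$ of common $\mu$-mass $\alpha$ with $T(A_i) \subset I$. Define $T'$ by subtracting $\epsilon$ from $T$ on $A_1$, adding $\epsilon$ on $A_2$, and leaving $T$ unchanged elsewhere. A direct computation shows $u_{T'(\mu)} - u_{T(\mu)}$ is non-negative, of order $\alpha\epsilon$, and supported in a compact subset of $I$; the strictly positive gap $u_\nu - u_{T(\mu)}$ on this compact set (from irreducibility) implies $T'(\mu) \leq_c \nu$ for small $\epsilon$. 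Strassen's theorem yields a martingale kernel $T'(\mu) \to \nu$, whose concatenation with $(\id, T')_*\mu$ is a competing coupling in $\Pi(\mu, \nu)$; its cost first-order derivative in $\epsilon$ equals $\alpha[\theta'(x_1 - T(x_1)) - \theta'(x_2 - T(x_2))] < 0$ by strict convexity, contradicting the optimality of $\pi$.

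The main obstacle is step (d): translating the failure of slope 1 into clean measurable sets $A_1, A_2$ with strictly contracting images, controlling the cost derivative by $\theta'$ at representative points, and verifying the uniformity of the convex-order slack on compact subsets of the open interval $I$ are all technical points requiring careful bookkeeping. By contrast, the perturbation arguments in (c) are standard $c$-monotonicity variations adapted to the weak transport setting.
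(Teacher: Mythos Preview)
Your proposal is correct; parts (a)–(c) are close to the paper, while part (d) takes a genuinely different route.

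For (a)–(c) the paper packages your two-point ``swap'' variations through the $C$-monotonicity principle of \cite{BaBePa18}: this furnishes a full-measure set $\Gamma$ on which the pointwise inequality \eqref{eq:monotonicity} holds, and a single derivative computation at $\alpha=0$ yields $(x-T(x)-y+T(y))(T(x)-T(y))\geq 0$, giving monotonicity and the $1$-Lipschitz bound simultaneously. Your comonotone-rearrangement shortcut for monotonicity and your averaging argument for uniqueness are, if anything, more direct than what the paper writes out.

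Part (d) is where the approaches diverge. The paper remains inside the $C$-monotonicity framework and proves two structural lemmas about the induced martingale coupling $\bar\pi=(T,\id)_*\pi\in\Pi_M(T(\mu),\nu)$: on an irreducible interval some pair of fibers $\bar\pi_{T(x)},\bar\pi_{T(y)}$ must have overlapping convex hulls (Lemma~\ref{lem:MGintersections}), and overlapping fibers can be continuously rearranged so as to push the two barycenters apart (Lemma~\ref{lem:competitor}); when the slope between $x$ and $y$ is strictly less than $1$ this produces a two-point violation of \eqref{eq:monotonicity}. Your argument bypasses both lemmas entirely: you perturb $T$ on sets $A_1,A_2$, use the strict positivity of $u_\nu-u_{T(\mu)}$ on compacta of $I$ to keep $T'(\mu)\leq_c\nu$, and invoke Strassen to rebuild a competing coupling from scratch. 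This is conceptually simpler---no analysis of martingale supports is needed---but trades that simplicity for the measure-theoretic bookkeeping you correctly flag (selection of $A_i$, atoms of $\mu$, uniform control of the cost derivative over $A_i$). The paper's $C$-monotonicity device absorbs exactly that bookkeeping into the existence of the single set $\Gamma$, at the price of the two auxiliary lemmas.
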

To prove Theorem \ref{thm:optimality implies fitting}, we need some further properties connected to  irreducibility:

\begin{lemma}\label{lem:MGintersections}
	Suppose  $\{u_\mu < u_\nu\} =: (a,b)$. Then for any $\pi\in\Pi_M(\mu,\nu)$, any regular disintegration $(\pi_x)_{x\in \R}$ wrt $\mu$ and any $c \in (a,b)$  such that
	$\mu((a,c))>0,\, \mu((c,b))>0$,
	there are $x\in (a,c]$, $y\in [c,b)$, $x\neq y$, such that the supports of $\pi_x$ and $\pi_y$ overlap, i.e.
	\begin{align}\label{eq:MTintersections}
	\begin{split}
		\text{\rm int}(\text{\rm co}(\supp(\pi_x))) \cap \text{\rm co}(\supp(\pi_y)) \cup	\text{\rm int}(\text{\rm co}(\supp(\pi_y))) \cap \text{\rm co}(\supp(\pi_x)) \neq \emptyset.
	\end{split}
	\end{align}
\end{lemma}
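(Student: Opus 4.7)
The plan is to argue by contradiction: suppose that for $\mu\otimes\mu$-a.e.\ pair $(x,y)$ with $x\in(a,c]$, $y\in[c,b)$ and $x\neq y$, one has $\text{int}(J_x)\cap J_y=\emptyset$ and $\text{int}(J_y)\cap J_x=\emptyset$, writing $J_z:=\text{co}(\supp(\pi_z))$. The two closed intervals $J_x$ and $J_y$ may then meet at most in one common endpoint; combined with the martingale-induced sandwich $\min J_x\leq x\leq c\leq y\leq \max J_y$ this forces $M(x)\leq m(y)$, where $M(z):=\max J_z$ and $m(z):=\min J_z$.

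Applying Fubini to this $\mu\otimes\mu$-a.e.\ inequality and setting $c^*$ to be the $\mu$-essential supremum of $M$ over $(a,c]$, I obtain $M(x)\leq c^*$ for $\mu$-a.e.\ $x\in(a,c]$ and $m(y)\geq c^*$ for $\mu$-a.e.\ $y\in[c,b)$. Using in addition the martingale bounds $M(z)\geq z$ and $m(z)\leq z$, I conclude that $\mu|_{(a,c]}$ is concentrated on $(-\infty,c^*]$ and $\mu|_{[c,b)}$ on $[c^*,\infty)$. The hypotheses $\mu((a,c))>0$ and $\mu((c,b))>0$ rule out $c^*\leq a$ (since $M(x)\geq x>a$ on $(a,c]$) and $c^*\geq b$ (since $m(y)\leq y<b$ on $[c,b)$), giving $c^*\in(a,b)$.

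Now split the coupling at $c^*$: define $\pi^{(1)}:=\pi|_{(-\infty,c^*]\times \R}$ and $\pi^{(2)}:=\pi|_{(c^*,\infty)\times \R}$, with respective marginals $(\mu^{(i)},\nu^{(i)})$, so that $\mu=\mu^{(1)}+\mu^{(2)}$ and $\nu=\nu^{(1)}+\nu^{(2)}$. By the previous step, $\nu^{(1)}$ is concentrated on $(-\infty,c^*]$ and $\nu^{(2)}$ on $[c^*,\infty)$. The martingale property of $\pi$ passes to each piece, yielding $\mu^{(i)}(\R)=\nu^{(i)}(\R)$ and $\int x\,d\mu^{(i)}=\int y\,d\nu^{(i)}$ for $i=1,2$. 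Using the one-sided supports of $\mu^{(i)}$ and $\nu^{(i)}$ to drop the absolute value in $|{\cdot}-c^*|$, each identity becomes $u_{\mu^{(i)}}(c^*)=u_{\nu^{(i)}}(c^*)$. Summing gives $u_\mu(c^*)=u_\nu(c^*)$, contradicting $c^*\in(a,b)=\{u_\mu<u_\nu\}$.

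The crux of this plan is the construction of $c^*$ and especially the verification $c^*\in(a,b)$: without this the final potential identity fails to be a contradiction. The positivity hypotheses on $\mu|_{(a,c)}$ and $\mu|_{(c,b)}$, together with the martingale sandwich $m(z)\leq z\leq M(z)$, are precisely what prevent the degenerate cases $c^*\in\{a,b\}$.
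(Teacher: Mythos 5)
Your argument is correct and essentially matches the paper's proof: both deduce from the no-overlap hypothesis a single threshold (your $c^*$ coincides with the paper's $d_-$) at which $\pi$ splits into two martingale sub-couplings concentrated on opposite half-lines, and then derive the potential identity $u_\mu(c^*)=u_\nu(c^*)$, contradicting $c^*\in(a,b)$. The one point you should tighten is the claim that $\nu^{(1)}$ is concentrated on $(-\infty,c^*]$: besides the a.e.\ bound $M(x)\leq c^*$ on $(a,c]$, this also requires $\supp(\pi_x)\subset(-\infty,c^*]$ for $\mu$-a.e.\ $x\leq a$, which is not covered by the overlap hypothesis (it only ranges over $x\in(a,c]$) and instead needs the observation that $\pi_x=\delta_x$ for $\mu$-a.e.\ $x\notin(a,b)$ — a standard consequence of $u_\mu=u_\nu$ outside $(a,b)$ together with the martingale property, which the paper's proof also uses implicitly when it invokes \emph{``since $\pi$ is a martingale coupling''} to obtain the split for all $x<c$.
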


\begin{proof}
	To show this assertion, we assume the opposite. So there exist $c \in \{u_\mu < u_\nu\}$, $\pi \in \Pi_M(\mu,\nu)$ with fixed disintegration $(\pi_x)_{x\in \R}$ wrt $\mu$ and
	$$\mu((a,c))>0,\quad \mu((c,b))>0,$$
	so that for all $x,y$ with $a<x\leq c\leq y<b$, $x\neq y$, we have 
	\begin{align}\label{eq:notMTintersections}
	\text{\rm int}(\text{\rm co}(\supp(\pi_x))) \cap \text{\rm co}(\supp(\pi_y)) \cup	\text{\rm int}(\text{\rm co}(\supp(\pi_y))) \cap \text{\rm co}(\supp(\pi_x)) = \emptyset.
	\end{align}		
	Since $\pi$ is a martingale coupling, and by \eqref{eq:notMTintersections}, there exists $d\in(a,b)$ with
	\begin{gather}\label{eq:split of concentrations}
	\begin{split}
		\supp(\pi_x)\subset (-\infty,d]\quad\text{for $\mu$-a.e. }x<c,\\
		\supp(\pi_y)\subset [d,\infty) \quad\text{ for $\mu$-a.e. }y>c.
	\end{split}
	\end{gather}
	Write $d_+$ for the largest and $d_-$ for the smallest $d$ such that \eqref{eq:split of concentrations} holds. Note then that $d_-,d_+\in (a,b)$. We have either $\supp(\pi_c) \subset [d_-,d_+]$ or $\mu(\{c\}) =0$, which in any case implies $\mu([c\wedge d_-,c\vee d_+] \setminus \{c\}) = 0$.
	Thus, we infer
	\begin{align*}
	1 &=\pi((-\infty,c) \times (-\infty,d_-) \cup \{c\}\times [d_-,d_+] \cup (c,\infty)\times(d_+,\infty))\\
	&=\pi((-\infty,d_-)\times (-\infty,d_-] \cup \{c\} \times [d_-,d_+] \cup (d_+,\infty)\times [d_+,\infty) ),
	\end{align*}
	and we conclude by contradicting $u_\mu(d_-) < u_\nu(d_-)$ since
	\begin{align*}\textstyle 
	\int_\R |x-d_-|\mu(dx) &=\textstyle  \int_{(a,d_-)}|x-d_-|\mu(dx) + \int_{[d_-,b)} |x-d_-|\mu(dx) \\ &=\textstyle  \int_{(a,d_-)} |y-d_-|\pi_x(dy)\mu(dx) + \int_{[d_-,b)}|y-d_-|\pi_x(dy)\mu(dx) \\ &=\textstyle  \int_\R |x-d_-|\nu(dx). \qquad \qedhere
	\end{align*}
\end{proof}

%{\color{red}JB: what is the purpose of the next paragraph?}
%To show that property~\ref{property 4} holds, assume for a moment that $T$ is not increasing with slope 1 on an interval associated with an irreducible component. Then Lemma~\ref{lem:MGintersections} provides us with two points $x<y$ such that $T(y) - T(x)< y-x$. By Lemma~\ref{lem:competitor}, we can find some measure $p_\alpha,q_\alpha$ with mean values $v_\alpha,w_\alpha$ such that
%$$x-T(x) < x - v_\alpha \leq y - w_\alpha < y - T(y),$$
%as illustrated in Figure~\ref{fig:monotonicity and irreducibility}. The coupling $\pi$ was assumed to be optimal, therefore $C$-monotone, from which we can infer a contradiction.

\begin{lemma}\label{lem:competitor}
	Let $p,q\in \mathcal P_1(\R)$ have overlapping supports (cf.\ \eqref{eq:MTintersections}). Then there exists a continuous map $[0,1]\ni\alpha\mapsto (p_\alpha,q_\alpha)\in\mathcal P(\R)\times\mathcal P(\R)$ such that
	\begin{align}\label{eq:sum property}
		p_\alpha + q_\alpha = p + q\quad \forall \alpha \in [0,1],
	\end{align}
	and such that for some $\beta \in (0,1)$ the functions
	\begin{align}\label{eq:monotone moments}\textstyle 
		 [\beta,1]\ni\alpha \mapsto \int_\R zp_\alpha(dz),\quad  [\beta,1]\ni\alpha \mapsto \int_\R zq_\alpha(dz)
	\end{align}
	are strictly decreasing and increasing, respectively.
\end{lemma}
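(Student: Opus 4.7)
The plan is to construct $(p_\alpha,q_\alpha)$ by a linear mass swap: remove some mass of $p$ located at a high point $b$ and add it to $q$, while simultaneously removing some mass of $q$ located at a low point $a<b$ and adding it to $p$. Such an additive perturbation preserves $p+q$ automatically and shifts the barycenter of $p$ down and that of $q$ up linearly in $\alpha$, so I will in fact get strict monotonicity on the whole of $[0,1]$, which is stronger than what is claimed.

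The core step, and the main place where care is needed, is to extract from the hypothesis \eqref{eq:MTintersections} a pair $a<b$ with $a\in\supp(q)$ and $b\in\supp(p)$; the two asymmetric cases of \eqref{eq:MTintersections} have to be handled separately. In the first case, pick $c\in\text{int}(\text{co}(\supp(p)))\cap\text{co}(\supp(q))$; then $c<\sup\supp(p)$ yields some $b\in\supp(p)$ with $b>c$, and $\inf\supp(q)\leq c$ yields some $a\in\supp(q)$ with $a\leq c<b$. In the second case, pick $c\in\text{int}(\text{co}(\supp(q)))\cap\text{co}(\supp(p))$; now $\inf\supp(q)<c$ yields $a\in\supp(q)$ with $a<c$, and $c\leq\sup\supp(p)$ yields $b\in\supp(p)$ with $b\geq c>a$.

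With such $a<b$ in hand, choose disjoint open intervals $B_a\ni a$ and $B_b\ni b$ with $\sup B_a<\inf B_b$; by definition of support, $m:=\min\{p(B_b),q(B_a)\}>0$. Set $\tilde\rho:=\tfrac{m}{p(B_b)}\,p|_{B_b}$ and $\tilde\sigma:=\tfrac{m}{q(B_a)}\,q|_{B_a}$, so that $\tilde\rho\leq p$, $\tilde\sigma\leq q$, and both have total mass $m$. Define $p_\alpha:=p-\alpha\tilde\rho+\alpha\tilde\sigma$ and $q_\alpha:=q-\alpha\tilde\sigma+\alpha\tilde\rho$ for $\alpha\in[0,1]$. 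Nonnegativity is clear from $\alpha\tilde\rho\leq p$ and $\alpha\tilde\sigma\leq q$, the total mass is conserved, the path $\alpha\mapsto(p_\alpha,q_\alpha)$ is continuous in total variation (a fortiori weakly), and \eqref{eq:sum property} is immediate. For \eqref{eq:monotone moments}, observe $\int z\,\tilde\sigma(dz)\leq m\sup B_a<m\inf B_b\leq \int z\,\tilde\rho(dz)$, so both barycenters are affine in $\alpha$ with nonzero, opposite slopes and hence strictly monotone on all of $[0,1]$; any $\beta\in(0,1)$ suffices.
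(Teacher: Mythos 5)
Your proof is correct and takes a genuinely different route from the paper's. The paper constructs $(p_\alpha,q_\alpha)$ globally via quantile truncations: it cuts off the ``lower $\alpha$-quantile'' $\tilde p_\alpha$ of $p$ and the ``lower $(1-\alpha)$-quantile'' $\tilde q_{1-\alpha}$ of $q$ and sets $p_\alpha=\tilde p_\alpha+\tilde q_{1-\alpha}$, $q_\alpha=p+q-p_\alpha$; monotonicity of the barycenters then only holds on some terminal subinterval $[\beta,1]$ and requires locating, via \eqref{eq:MTintersections}, quantile thresholds $c_1>c_2$ for which the two truncated pieces separate. You instead perform a \emph{localized mass swap}: take disjoint neighbourhoods $B_a\ni a\in\supp(q)$ and $B_b\ni b\in\supp(p)$ with $a<b$, scale down the restrictions so both have equal mass $m>0$, and move that mass affinely between $p$ and $q$. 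This is more elementary, preserves $p+q$ by construction, gives an affine (hence continuous in total variation) path, and yields strict monotonicity of the barycenters on the whole of $[0,1]$, which is stronger than required. Your extraction of $a<b$ from the two asymmetric cases of \eqref{eq:MTintersections} is also correct and arguably cleaner than the paper's appeal to $c_1,c_2\in\supp(p)\cap\supp(q)$, which is not literally what \eqref{eq:MTintersections} gives. One small remark for downstream use: in the paper's construction $(p_1,q_1)=(p,q)$, which is what the proof of Theorem~\ref{thm:optimality implies fitting} implicitly exploits when it writes $v_1=T(x)$; your parametrization gives $(p_0,q_0)=(p,q)$ instead, so one would either substitute $\alpha\mapsto 1-\alpha$ or adjust the endpoint bookkeeping in that later argument. (Incidentally, the stated direction ``decreasing / increasing'' in the lemma matches your construction but is the opposite of what the paper's own proof actually establishes, so this appears to be a sign typo in the paper; your version is the one consistent with the lemma as written.)
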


\begin{proof}
	Let $\alpha \in [0,1]$ and define the inverse distribution functions by
	$$s_\alpha := \inf\{x\in\R\colon F_p(x) \geq \alpha\},\quad t_\alpha := \inf\{x\in\R\colon F_q(x) \geq \alpha\},$$
	where $F_p$ and $F_q$ denote the cumulative distribution functions of $p$ and $q$, respectively. Define two auxiliary measures 
	\begin{align*}
		 \tilde p_\alpha := p|_{(-\infty,s_\alpha)} + (\alpha - F_p(s_\alpha-))\delta_{\{s_\alpha\}},\,\,\,\,
		 \tilde q_\alpha := q|_{(-\infty,t_\alpha)} + (\alpha - F_q(t_\alpha-))\delta_{\{t_\alpha\}}.
	\end{align*}
Defining probability measures $p_\alpha$ and $q_\alpha$ by $p_\alpha := \tilde p_\alpha + \tilde q_{1-\alpha}$ and $q_\alpha := p+q-p_\alpha$,  yields \eqref{eq:sum property} and  continuity of $\alpha \mapsto (p_\alpha,q_\alpha)$. Since $p$ and $q$ satisfy \eqref{eq:MTintersections}, { we find constants $c_1, c_2 \in \supp(p)\cap \supp(q)$ with $c_1 > c_2$} such that
	$$\alpha_1 := p([c_1,+\infty)) > 0\text{ and } \alpha_2 := q((-\infty,c_2])>0.$$
	Let $\alpha_3 < \alpha_1 \wedge \alpha_2$, then for any $1-\alpha_3\leq \alpha< \alpha' \leq 1$ we have
	\begin{align*}\textstyle
	\int_\R zp_{\alpha'}(dz) - \int_\R zp_\alpha(dz) &= \textstyle \int_\R z~ (\tilde p_{\alpha'} - \tilde p_{\alpha})(dz) + \int_\R z~ (\tilde q_{1-\alpha'} - \tilde q_{1-\alpha})(dz)\\	
	&\geq ({\alpha'}- {\alpha})(c_1 - c_2) > 0,
	\end{align*}	
	and conclude that for $\beta := 1 - \alpha_3$ the maps defined in \eqref{eq:monotone moments} are strictly monotone.
\end{proof}
An important tool in the proof of Theorem~\ref{thm:optimality implies fitting} is $C$-monotonicity, a concept which was introduced for the weak optimal transport problem in \cite{BaBeHuKa17, GoJu18, BaBePa18}.
\begin{definition}[$C$-monotonicity]\label{def:C-monotonicity}
	A coupling $\pi \in \Pi(\mu,\nu)$ is $C$-monotone if there exists a measurable set $\Gamma\subseteq X$ with $\mu(\Gamma)=1$, such that for any finite number of points $x_1,\dots,x_N$ in $\Gamma$ and measures $m_1,\dots,m_N$ 
	with $\sum_{i=1}^N m_i = \sum_{i=1}^N \pi_{x_i}$
	\begin{align*}\textstyle 
		\sum_{i=1}^N \theta\left (x_i-\int y\,\pi_{x_i}(dy)\right) \leq \sum_{i=1}^N C\left (x_i-\int y \,m_i(dy)\right).
	\end{align*}
\end{definition}
%
%If $\pi\in\Pi(\mu,\nu)$ is optimal for the weak transport problem \eqref{eq:GJ} then  $\pi$ is $C$-monotone (\cite[Theorem 5.2]{BaBePa18}). Moreover, for $\theta$ strictly convex it is straightforward to deduce that the map $x\mapsto \int_\R y\pi_x(dy)$ is $\mu$-almost surely increasing and 1-Lipschitz. 

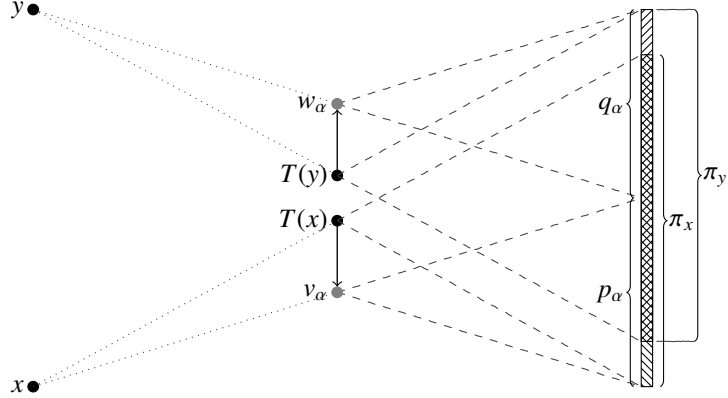
\begin{figure}%[h!!]
\centering
\tikzsetnextfilename{weakmonotonicity}
\begin{tikzpicture}[pencildraw/.style={
    black!75,
    decorate,
    decoration={random steps,segment length=1pt,amplitude=0.1pt}
    }
]
\draw[pencildraw,dotted] (0,0) -- (4,2.2);
\draw[pencildraw,dotted] (0,5) -- (4,2.8);
\draw[black, fill = black] (0,0) circle (2pt);
\draw[black, fill = black] (0,5) circle (2pt);
\draw[black, fill = black] (4,2.2) circle (2pt);
\draw[pencildraw,dashed] (4,2.2) -- (8,4.4);
\draw[pencildraw,dashed] (4,2.2) -- (8,0);
\draw[black, fill = black] (4,2.8) circle (2pt);
\draw[pencildraw,dashed] (4,2.8) -- (8,5);
\draw[pencildraw,dashed] (4,2.8) -- (8,0.6);
\draw (0,0) node[left] {$x$};
\draw (0,5) node[left] {$y$};
\draw[pencildraw,dotted] (0,0) -- (4,1.25);
\draw[pencildraw,dotted] (0,5) -- (4,3.75);
\draw (4,2.2) node[left] {$T(x)$};
\draw (4,2.8) node[left] {$T(y)$};
\draw (4,3.75) node[left] {$w_\alpha$};
\draw[pencildraw,dashed] (4,3.75) -- (8,5);
\draw[pencildraw,dashed] (4,3.75) -- (8,2.5);
\draw (4,1.25) node[left] {$v_\alpha$};
\draw[pencildraw,dashed] (4,1.25) -- (8,2.5);
\draw[pencildraw,dashed] (4,1.25) -- (8,0);
\draw[gray, fill = gray] (4,3.75) circle (2pt);
\draw[gray, fill = gray] (4,1.25) circle (2pt);
\draw[black,->,line width = .5pt] (4,2.8) -- (4,3.675);
\draw[black,->,line width = .5pt] (4,2.2) -- (4,1.325);
\draw[draw=black, pattern=north east lines, pattern color=black](8,0.6) rectangle (8.15,5);
\draw[pencildraw] (8.2,5) -- (8.7,5);
\draw[pencildraw] (8.2,0.6) -- (8.7,0.6);
\draw[decoration={brace}, decorate]  (8.7,5) node {} -- (8.7,0.6);
\draw (8.7,2.775) node[right] {$\pi_y$};
\draw[draw=black, pattern=north west lines, pattern color=black]  (8,0) rectangle (8.15,4.4);
\draw[decoration={brace}, decorate]  (8.25,4.4) node {} -- (8.25,0);
\draw (8.25,2.175) node[right] {$\pi_x$};

\draw[decoration={brace}, decorate]  (7.9,0) node {} -- (7.9,2.5);
\draw[decoration={brace}, decorate]  (7.9,2.5) node {} -- (7.9,5);
\draw (7.9,3.725) node[left] {$q_\alpha$};
\draw (7.9,1.225) node[left] {$p_\alpha$};
\end{tikzpicture}
\caption{Sketch of usage of Lemma~\ref{lem:MGintersections} and Lemma~\ref{lem:competitor} to find contradiction to $C$-monotonicity of an $C$-optimal coupling $\pi$.}
\label{fig:monotonicity and irreducibility}
\end{figure}
\begin{proof}[Proof of Theorem~\ref{thm:optimality implies fitting}]
	Let $\pi^*$ be optimal for the weak optimal transport problem \eqref{eq:GJ}. By the monotonicity principle \cite[Theorem 5.2]{BaBePa18} $\pi^*$ is $C$-monotone, therefore, there exists a set $\Gamma\subset \R$ with $\mu(\Gamma) = 1$ and such that for all
$ x,y\in\Gamma,\,  p_1,p_2\in \mathcal P_1(\R)$ satisfying $ \pi^*_x + \pi^*_y= p_1 + p_2,$
we have
\begin{align}\label{eq:monotonicity}\textstyle 
\theta\Big(x-\int_\R z\pi^*_x (dy)\Big) + \theta\Big(y-\int_\R z\pi^*_y(dz)\Big)\leq \theta\Big(x-\int_\R zp_1(dz)\Big) + \theta\Big(y-\int_\R zp_2(dz)\Big).
\end{align}
As an immediate consequence, we find that the map $T(x) = \int_{\R} y \pi^*_x(dy)$ is $\mu$-almost surely increasing. Letting $x,y\in\Gamma$, for any $\alpha \in [0,1]$ we define 
$$p^\alpha_1 = (1-\alpha) \pi^*_x + \alpha \pi^*_y,\quad p^\alpha_2 = \alpha \pi^*_x + (1-\alpha) \pi^*_y.$$
Plugging $p^\alpha_1$ and $p^\alpha_2$ into \eqref{eq:monotonicity} and computing the righthand-side derivative yields
\begin{align*}\textstyle 
	\Big(\partial_+\theta(x-T(x)) - \partial_+\theta(y-T(y))\Big)(T(x)-T(y))\geq 0,
\end{align*}
which is by strict convexity of $\theta$ equivalent to
\begin{align*}\textstyle 
	\Big(x-T(x)-y+T(y)\Big)(T(x)-T(y))\geq 0.
\end{align*}
Hence, $|x-y| \geq |T(x)-T(y)|$ and $T$ is $\mu$-a.e.\ $1$-Lipschitz.

 Further, we have that $T(\mu)\leq_c\nu$ and $\bar \pi := (T,id)(\pi^*) \in \Pi_M(T(\mu),\nu)$. Without loss of generality, we can assume $\bar \pi_{T(x)} = \pi^*_x,\, \mu\text{-a.e.}$ Let $(I_k)_{k\in\N}$ be the intervals given by the decomposition of $(T(\mu),\nu)$ into irreducible intervals. Assume that there is an interval $I_k$ {so that on $T^{-1}(I_k)$ the map} $T$ does not have $\mu$-a.s.\ slope 1. Then Lemma~\ref{lem:MGintersections} provides two points $\tilde x,\tilde y$ in $T(\Gamma \cap I_k)$ and two corresponding points $x,y\in \Gamma \cap I_k$ such that
$$x < y,\quad T(x) = \tilde x < \tilde y = T(y),\quad x-T(x)> y-T(y),$$
and the overlapping condition \eqref{eq:MTintersections} holds for $\bar \pi_{T(x)} = \pi^*_x$, $\bar \pi_{T(y)} = \pi^*_y$. Lemma~\ref{lem:competitor} allows us to define measures $p_\alpha$ and $q_\alpha$ on $\R$ such that
$$\pi^*_x + \pi^*_y = \bar \pi_{T(x)} + \bar \pi_{T(y)} = p_\alpha + q_\alpha.$$
For a graphical depiction compare with Figure~\ref{fig:monotonicity and irreducibility}. Hence,
$$\textstyle T(x) > v_\alpha := \int_\R z p_\alpha(dz),\quad T(y) < w_\alpha := \int_\R z q_\alpha(dz),$$
are strictly monotone, continuous maps on $[\beta,1]$. Therefore, we find $\alpha \in (\beta,1)$ with
\begin{align*}
	 x - v_1 = x-T(x) < x-v_\alpha \leq y-w_\alpha<y-T(y) = y - w_1.
\end{align*}
By strict convexity of $\theta$ we find
$$ \frac{\theta(y-w_\alpha)-\theta(x-T(x))}{y-w_\alpha-x+T(x)}< \frac{\theta(y-T(y))-\theta(x-\nu_\alpha)}{y-T(y)-x+\nu_\alpha}, $$
which then yields a contradiction to $C$-monotonicity:
\begin{align*}\textstyle 
	\theta\Big(x - \int_\R z\pi^*_x(dz)\Big) + \theta\Big(y - \int_\R z \pi^*_y(dz)\Big) &=\textstyle  \theta(x-T(x)) + \theta(y-T(y)) \\
	&>\textstyle  \theta(x-v_\alpha) + \theta(y-w_\alpha)\\ &=\textstyle  \theta\Big(x - \int_\R zp_\alpha(dz)\Big) + \theta\Big(y-\int_\R zq_\alpha(dz)\Big).\qedhere
\end{align*}
\end{proof}

\section{Sufficiency of the geometric characterization}

Naturally the question arises whether any map $T$ satisfying the properties in Theorem~\ref{thm:optimality implies fitting} must be optimal. The aim of this section is to establish this:

\begin{theorem}\label{thm:fitting implies optimality}
	Let $\mu,\nu\in\mathcal P_1(\R)$. Then any coupling $\pi\in\Pi(\mu,\nu)$ for which $T(x) := \int_{\R} y \pi_x(dy)$  is admissible (in the sense of Definition~\ref{def:T properties}) { with slope $1$ on each interval $T^{-1}(I)$, where $I$ is irreducible wrt $(T(\mu),\nu)$,} is optimal for \eqref{eq:GJ}, i.e.,
\begin{align*}\textstyle
	V_\theta(\mu,\nu) = \int_{\R} \theta(x- T(x)) \mu(dx).
\end{align*}
\end{theorem}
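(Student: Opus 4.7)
The plan is to prove the upper bound by direct construction, handle the strictly convex case via a $C$-monotonicity argument, and then extend to general convex $\theta$ by approximation.

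First, the inequality $V_\theta(\mu,\nu)\leq \int \theta(x-T(x))\,\mu(dx)$ is immediate from admissibility: Strassen's theorem applied to $T(\mu)\leq_c \nu$ furnishes a martingale kernel $k(z,\cdot)$ with $\nu=\int k(z,\cdot)\,T(\mu)(dz)$, and the coupling $\pi(dx,dy):=\mu(dx)\,k(T(x),dy)$ belongs to $\Pi(\mu,\nu)$ with barycenter map exactly $T$.

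For the matching lower bound I would first assume $\theta$ strictly convex with $V_\theta(\mu,\nu)<\infty$ and show that the coupling $\pi$ above is itself $C$-monotone in the sense of Definition~\ref{def:C-monotonicity}; combined with the converse direction of the monotonicity principle from \cite{BaBePa18}, this forces $\pi$ to be optimal and the upper bound is tight. The $C$-monotonicity check rests on two structural facts: (a) the slope-$1$ condition forces $T(x)=x+c_n$ on $T^{-1}(I_n)$, so that $x-T(x)\equiv -c_n$ there; (b) by \eqref{DecomposingMM}, the kernel $\pi_x$ is supported in $\bar I_n$ when $T(x)\in I_n$ and equals $\delta_{T(x)}$ when $T(x)\in F$. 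In the two-point check: if $T(x)$ and $T(y)$ share the same $I_n$, fact (a) together with Jensen's inequality applied to the sum-preserved variables $x-\bar m_1,\, y-\bar m_2$ (whose sum equals $-2c_n$) yields the inequality; if $T(x),T(y)$ sit in distinct cells, fact (b) combined with a short mass-splitting computation $\bar m_1-T(x)=(1-p)(\beta-\alpha')$ with $\beta\geq a_m\geq b_n\geq\alpha'$ shows $\bar m_1\geq T(x)$, while the 1-Lipschitz monotonicity of $T$ places the unconstrained minimizer $r^*=(x-y+T(x)+T(y))/2$ of the convex function $r\mapsto\theta(x-r)+\theta(y-T(x)-T(y)+r)$ weakly below $T(x)$; hence $r=T(x)$ is the feasible-range minimum. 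The $N$-point version follows in the same spirit by partitioning indices according to their host cell and reducing within and across cells.

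For general convex $\theta$ with $V_\theta(\mu,\nu)<\infty$, approximate by the strictly convex $\theta_\epsilon(x):=\theta(x)+\epsilon\sqrt{1+x^2}$, which retains the growth~\eqref{eq:theta growth condition} uniformly in $\epsilon$. The strictly convex case gives $V_{\theta_\epsilon}(\mu,\nu)=\int \theta_\epsilon(x-T(x))\,\mu(dx)$; passing $\epsilon\downarrow 0$ by Theorem~\ref{thm:stability'} (with $\rho=1$) on the left and dominated convergence on the right yields $V_\theta(\mu,\nu)=\int \theta(x-T(x))\,\mu(dx)$.

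The main obstacle I foresee is the case-by-case $C$-monotonicity check in the strictly convex step, in particular the $N$-point extension across several irreducible intervals, which requires careful bookkeeping of mass conservation within each cell and of the argument that the feasible-range minimum of the two-point cost function is attained at the original barycenter.
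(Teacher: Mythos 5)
Your approach for the upper bound is correct and standard. However, the lower-bound argument has a genuine gap that makes the proof incomplete as stated.

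The central problem is your appeal to a ``converse direction of the monotonicity principle from \cite{BaBePa18}.'' What \cite[Theorem~5.2]{BaBePa18} proves (and what the paper uses in the proof of Theorem~\ref{thm:optimality implies fitting}) is the \emph{necessity} of $C$-monotonicity: an optimizer of the weak transport problem is $C$-monotone. The converse --- that $C$-monotonicity of a coupling $\pi$ forces $\pi$ to be optimal --- is not established there and is not automatic for weak transport costs, unlike the classical cyclical-monotonicity-implies-optimality theorems in ordinary OT. The paper sidesteps exactly this by not proving $C$-monotonicity of $\pi$ at all; instead it constructs an explicit \emph{dual} certificate of optimality. Concretely, it approximates $T$ by strictly increasing $T_\epsilon$ via Lemma~\ref{lem:aux map}, invokes \cite[Theorem~2.1]{Sh16} to obtain a convex Lipschitz $f_\epsilon$ with the Legendre-type identity $Rf_\epsilon(x)=f_\epsilon(T_\epsilon(x))+\theta(x-T_\epsilon(x))$ which is moreover \emph{affine} on the irreducible intervals, and then checks that the pair $(\pi^\epsilon, f_\epsilon)$ saturates duality; optimality of $\pi$ follows by taking $\epsilon\downarrow 0$ and using Theorem~\ref{thm:stability'}. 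That is a structurally different mechanism (primal-dual verification rather than monotonicity-implies-optimality), and it is the mechanism that actually closes the lower bound.

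Even granting a converse monotonicity principle, your verification of $C$-monotonicity is incomplete. The two-point within-cell case via Jensen is fine, and the two-point cross-cell case is plausible, but Definition~\ref{def:C-monotonicity} requires the inequality for \emph{all} finite $N$ and \emph{all} redistributions $m_1,\dots,m_N$ of the pooled mass $\sum_i \pi_{x_i}$. Once $N\geq 3$ points span several irreducible intervals, an admissible redistribution is free to mix mass between cells in ways that are not captured by ``partitioning indices according to their host cell and reducing within and across cells'': the $m_i$'s need not respect the cell structure, so a within-cell Jensen argument does not localize. You acknowledge this as the main obstacle, but it is not a bookkeeping difficulty --- it is a genuine open step.

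Two smaller points. First, the invocation of Theorem~\ref{thm:stability'} when sending $\epsilon\downarrow 0$ in $\theta_\epsilon=\theta+\epsilon\sqrt{1+x^2}$ is misplaced: that theorem varies $\mu,\nu$ with $\theta$ fixed, whereas you vary $\theta$ with $\mu,\nu$ fixed. This particular step can be repaired without it, since $\theta_\epsilon\geq\theta$ gives $V_{\theta_\epsilon}\geq V_\theta$, and if the strictly convex case identifies $T(\mu)$ as the minimizer for $\theta_\epsilon$ then $V_{\theta_\epsilon}(\mu,\nu)=\int\theta(x-T(x))\,d\mu+\epsilon\int\sqrt{1+(x-T(x))^2}\,d\mu\to\int\theta(x-T(x))\,d\mu$; combined with the upper bound this closes the limit by elementary monotonicity. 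Second, note the paper also reduces first to Lipschitz $\theta$ by truncation and monotone convergence before invoking \cite{Sh16}; if you pursue a dual route you will likely need this same reduction because the dual certificate from \cite{Sh16} is produced for Lipschitz costs.
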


The proof is based on dual optimizers and their explicit representation. As long as $T$ is strictly increasing, \cite[Theorem 2.1]{Sh16} provides dual optimizers to $V_\theta(\mu,T(\mu))$. Investigating dual optimizers further, we are able to show here
$V_\theta(\mu,T(\mu)) = V_\theta(\mu,\nu)$. First, Lemma~\ref{lem:aux map} helps us to carefully approximate the increasing map $T$ with strictly increasing maps $T_\epsilon$.

\begin{lemma}\label{lem:aux map}
	Let $T\colon \R \rightarrow \R$ be an increasing map, with $T-id$ decreasing. Then, for any $\epsilon>0$ there is a strictly increasing map $T_\epsilon\colon\R\rightarrow\R$, with $T_\epsilon-id$ decreasing, such that
	\begin{align*}
		|T-T_\epsilon|_\infty\leq \epsilon,
	\end{align*}
	and %the decompositions match, i.e.,
	 $T$ is affine with slope $1$ on an interval $I$ if and only if $T_\epsilon$ is affine with slope $1$ on $I$.
\end{lemma}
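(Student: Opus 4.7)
The plan is to construct $T_\epsilon$ as an additive perturbation $T_\epsilon := T + \gamma$, where $\gamma$ is a small, weakly increasing function engineered to restore strict monotonicity on the flat intervals of $T$ without disturbing the slope-$1$ intervals.

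First I would identify the at-most-countable collection $(K_m)_{m}$ of maximal closed intervals on which $T$ is constant; these are disjoint and also disjoint from the collection $(J_n)_n$ of maximal open intervals on which $T$ is affine with slope $1$. Note that $T(x) < T(y)$ for $x<y$ unless $[x,y]$ is entirely contained in some $K_m$, by maximality of the $K_m$'s.

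Next I would choose positive jumps $\delta_m < \min(|K_m|, 2^{-m}\epsilon)$ and put
\[
\gamma(x) := -\tfrac{1}{2} \sum_m \delta_m + \int_{-\infty}^x \sum_m \tfrac{\delta_m}{|K_m|}\, \1_{K_m}(t)\, dt,
\]
with $\1_{K_m}/|K_m|$ replaced by a suitable integrable positive density if $K_m$ is unbounded (which can occur for at most two values of $m$). The function $\gamma$ is constant off $\bigcup_m K_m$, affinely increases by $\delta_m$ on each $K_m$ with slope $\delta_m/|K_m|<1$, and is centered to satisfy $|\gamma|_\infty \le \tfrac{1}{2}\sum_m \delta_m \le \epsilon$. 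Setting $T_\epsilon := T+\gamma$, one has $|T_\epsilon - T|_\infty \le \epsilon$ directly; $T_\epsilon - id$ is decreasing because $T' + \gamma' \le 1$ a.e.; and $T_\epsilon$ is strictly increasing because on any non-degenerate $[x,y]$, either $[x,y] \subset K_m$ for some $m$ and $\gamma$ strictly increases on it, or $[x,y]$ is not contained in any $K_m$ and $T$ itself strictly increases from $x$ to $y$.

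The delicate part is the coincidence of slope-$1$ intervals. Since $T$ and $T_\epsilon$ are absolutely continuous, $T$ (resp.\ $T_\epsilon$) is affine with slope $1$ on $I$ iff $T'=1$ (resp.\ $T_\epsilon'=1$) a.e.\ on $I$. If $T'=1$ a.e.\ on $I$, then $|I\cap K_m|=0$ for every $m$ (as $T'=0$ on $K_m$), so $\gamma'=0$ a.e.\ on $I$ and $T_\epsilon'=1$ a.e.\ on $I$. Conversely, if $T_\epsilon'=1$ a.e.\ on $I$, then using $\gamma'=\delta_m/|K_m|<1$ on each $K_m$ together with $T'+\gamma'\le 1$ everywhere, we must have $|I\cap K_m|=0$ for every $m$, so $\gamma'=0$ a.e.\ on $I$ and hence $T'=1$ a.e.\ on $I$. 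The main obstacle I anticipate is the bookkeeping around an intricate geometry of the $K_m$'s (countable accumulation or unbounded components), but this is handled by the summability condition $\sum_m \delta_m \le 2\epsilon$ together with the choice of an integrable density on unbounded flat intervals.
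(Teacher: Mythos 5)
Your construction is essentially identical to the paper's: perturb $T$ by the integral of a small nonnegative density supported only on the maximal flat intervals of $T$, with slope kept strictly below $1$ so that the slope-$1$ parts of $T$ and $T_\epsilon$ coincide. Your choice $\delta_m<\min(|K_m|,2^{-m}\epsilon)$ and your explicit treatment of possibly unbounded flat components are, if anything, slightly more careful than the paper's $g_\epsilon\wedge 1$ truncation (which does not handle infinite-length flats and can saturate at slope exactly $1$ on very short ones), but the route and the conclusion are the same.
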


\begin{proof}
	Since $T$ is increasing we know that the pre-image of any point under $T$ corresponds to an interval. Therefore, we can find at most countable many, disjoint intervals $(I_k)_{k\in\N}$ of finite length, where $T(I_k)$ is a singleton and $T$ is strictly increasing on the complement, i.e., on $\bigcap_k I_k^c$. For any $\epsilon>0$, we define $g_\epsilon\colon\R\rightarrow\R$
	\begin{align}\label{eq:geps}
		g_\epsilon(x) := \begin{cases} \frac{\epsilon}{\lambda(I_k)2^k} & \exists k\in\N\colon x \in I_k,\\ 0 &\text{else.} \end{cases}
	\end{align}
%{\color{blue}Suppose no interval $I_k$ has infinite length.}	
Then the map $T_\epsilon(x) := T(x) + \int_{-\infty}^x [g_\epsilon(y) \wedge 1]\,dy$ satisfies the desired properties. %{\color{blue}Otherwise, if say some $\lambda(I_k)=+\infty$ and $T(x)=c$ for $c\in I_k$, then we can add (on $I_k$) to the previously defined $T_\epsilon$ a 1-Lipschitz strictly increasing function $f$ such that $c\leq f(x)\leq c+\epsilon$ on $I_k$. There can be at most two such intervals $I_k$, so at most repeating this procedure once more, we obtain the desired claim.}
\end{proof}

Let $T\colon\R\rightarrow \R$ be an increasing  1-Lipschitz   function. Then $T$ induces a unique decomposition of $\R$  into at most countably many maximal, closed, disjoint intervals  $(I_k)_k$ and a ($G_\delta$-set) $G$ such that for all $k\in\N$ the map $T|_{I_k}$ is affine with slope 1 and $T|_G$ is properly contractive, i.e., for any two points $x,y \in G$ we have $|T(x) - T(y)| < |x-y|$. Below we call the intervals  $(I_k)_k$ \emph{irreducible} wrt $T$.

\begin{proof}[Proof of Theorem~\ref{thm:fitting implies optimality}]
%We are given an increasing map $T$ such that $T-id$ is decreasing.
The convex function $\theta$ can be approximated by a pointwise-increasing sequence of {Lipschitz} convex functions $(\theta_n)_{n\in\N}$, e.g.
$$\theta_n(x) := \theta(x)\,1_{[-n,n]}(x)+[(x-n)\partial_+\theta(n)]\, 1_{(n,+\infty)}+[\theta(-n) + (x+n)\partial_-\theta(-n)]\,1_{(-\infty,-n)}.$$
%
%$$\theta_n(x) := \begin{cases} \theta(x)& x\in [-n,n],\\ \theta(n) + (x-n)\partial_+\theta(n) &x>n,\\\theta(-n) + (x+n)\partial_-\theta(-n)&\text{else.}\end{cases}$$
By monotone convergence we find\footnote{Recall that by  \cite{GoRoSaSh18} the optimizer of the  weak transport problem does not depend on the convex cost.}
$ 	\sup_n V_{\theta_n}(\mu,\nu) = V_\theta(\mu,\nu)$. 
Indeed, if $\pi^n$ optimizes $V_{\theta_n}(\mu,\nu)$ and assuming wlog\ that $\pi_n\to\pi$, then 
\begin{align*}\textstyle 
\lim_n \int \theta_n\left(x-\int y\pi_x^n(dy)\right)\mu(dx)&\geq \textstyle \lim_n \int \theta_m\left(x-\int y\pi_x^n(dy)\right)\mu(dx)\\ &\geq \textstyle  \int \theta_m\left(x-\int y\pi_x(dy)\right)\mu(dx),
 \end{align*}
by \cite[Proposition 2.8]{BaBePa18}. Thus the claim follows by taking the supremum in $m$.

From this, it suffices to consider the case when $\theta$ is Lipschitz continuous.
By Lemma~\ref{lem:aux map} we find for any $\epsilon>0$ a strictly increasing map $T_\epsilon$, such that $T_\epsilon - id$ is decreasing, the decompositions of $T$ and $T_\epsilon$ match, and $|T_\epsilon - T|_\infty \leq \epsilon.$ Then \cite[Theorem 2.1]{Sh16} provides a convex, Lipschitz continuous function $f_\epsilon\colon\R\rightarrow \R$ such that for all $x\in\R$
$$\textstyle  Rf_\epsilon(x) := \inf_{y \in \R} f_\epsilon(y) + \theta(x-y) = f_\epsilon(T_\epsilon(x)) + \theta(x-T_\epsilon(x)),$$
which is even affine on the parts where $T_\epsilon$ is affine. Write $S_\epsilon(x) = \int_{-\infty}^x [g_\epsilon(z)\wedge 1]dz$, where $g_\epsilon$ is defined as in \eqref{eq:geps} so $T_\epsilon=T+S_\epsilon$.
In the following we will show that $f_\epsilon$ is a dual optimizer of the coupling $\pi^\epsilon$ defined as the push-forward measure of $\pi$ by the function
$$(x,y) \mapsto (x,y+S_\epsilon(x)).$$
First, we compute the barycenters of $\pi^\epsilon_x$:
\begin{align*}\textstyle 
	\int_\R y \pi^\epsilon_x(dy) = \int_\R y + S_\epsilon(x) \pi_x(dy)= T_\epsilon(x),
\end{align*}
and $T_\epsilon(\mu) \leq_c \proj_2(\pi^\epsilon) =: \nu_\epsilon$. 
Given the sets $(I_k)_{k\in\N}$ and $F$ from the decomposition of $(T(\mu),\nu)$ into irreducible intervals, we find the sets $(I_k^\epsilon)_{k\in\N}$ and $F_\epsilon$ from the decomposition of $(T_\epsilon(\mu),\nu_\epsilon)$ into irreducible intervals by setting
$$\textstyle I^\epsilon_k := I_k + S_\epsilon(x)\quad \text{for any }y\in I_k \text{ and } x \in T^{-1}(y),\quad F_\epsilon := \bigcap_k (I^\epsilon_k)^c.$$
In view of the structure of martingale couplings, see \cite[Theorem A.4]{BeJu16}, we find that for $\mu$-a.e.\ $x\in T_\epsilon^{-1}(I^\epsilon_k)$ we have $\supp(\pi_x^\epsilon) \subset I^\epsilon_k$ and $\pi_x^\epsilon = \delta_{T(x) + S_\epsilon(x)}$ $\mu$-a.e.\ on $F_\epsilon$.
{Since the decompositions of $T$ and $(T(\mu),\nu)$ are complementary, we infer the same for the decompositions of the map $T_\epsilon$ and the pair $(T_\epsilon(\mu),\nu_\epsilon)$.} The next computation establishes duality of the pair $(\pi^\epsilon,f_\epsilon)$, where we use affinity of $f_\epsilon$ on the irreducible components of $(T_\epsilon(\mu),\nu_\epsilon)$:
\begin{align*}
\textstyle \int_\R \theta\left(x - \int_\R y \pi^\epsilon(dy)\right) \mu(dx) &= \textstyle  \int_\R \theta\left(x-T_\epsilon(x)\right)\mu(dx)\,= \int_\R Rf_\epsilon(x) - f_\epsilon(T_\epsilon(x))\mu(dx)\\
&= \textstyle \int_\R Rf_\epsilon(x)\mu(dx) - \int_\R f_\epsilon\left(\int_\R y \pi_x^\epsilon(dy)\right)\mu(dx)\\
&= \textstyle \int_\R Rf_\epsilon(x)\mu(dx) - \int_\R f_\epsilon(y) \nu_\epsilon(dy).
\end{align*}
This easily proves that $\pi^\epsilon$ is optimal for the optimal weak transport problem
$ V_\theta(\mu,\nu_\epsilon)$.
Drawing the limit for $\epsilon\searrow 0$, we observe
$$\textstyle V_\theta(\mu,\nu_\epsilon) = \int_{\R} \theta(x-T_\epsilon(x))\mu(dx) \rightarrow \int_{\R}\theta(x-T(x))\mu(dx).$$
{ As $\theta$ is Lipschitz, we can apply stability Theorem~\ref{thm:stability'} and obtain optimality of $\pi$.}
\end{proof}

\section{Geometry of the weak monotone rearrangement}

{We can summarize Theorems \ref{thm:optimality implies fitting} and \ref{thm:fitting implies optimality} as follows: There exists an admissible map $T$ with slope $1$ on $T^{-1}(I)$ whenever $I$ is an irreducible interval wrt $(T(\mu),\nu)$, such that $\pi$ is optimal for \eqref{eq:GJ} iff $T(x)=\int y\pi_x(dy)$ ($\mu$-a.s.).} We now show that this map is the weak monotone rearrangement and is therefore the maximum in convex order of the set\footnote{We abuse terminology here, meaning maximum of $\{S(\mu):S\in M(\mu,\nu)\}$.}
$$M(\mu,\nu) := \left\{S\colon\R\rightarrow \R\colon S \text{ is increasing and 1-Lipschitz}, S(\mu)\leq_c \nu \right\}.$$
Heuristically speaking, if the maximum in convex order of the set $M(\mu,\nu)$ is again given by an increasing, 1-Lipschitz map, then this map is as close as possible to a shifted identity. In turn, this is favourable when trying to find the minimum in convex order of
$$\left\{(id-S)(\mu)\colon S\in M(\mu,\nu)\right\},$$
which gives reason to why there should exist a single optimizer to \eqref{eq:GJ} for all convex $\theta$.

As preparation to establishing Theorem~\ref{thm:wmr complementary}, we prove Lemma~\ref{lem:maximum of maps} and Lemma~\ref{lem:maximum minimizes id-map}.%, which will play a central role.

\begin{lemma}\label{lem:maximum of maps}
	Let $\mu\in\mathcal P_1(\R)$, $T,S\colon\R\rightarrow\R$ be increasing maps with
	$$\textstyle \int_\R T(x)\mu(dx) = \int_\R S(x)\mu(dx),$$
	then the maximum (wrt the convex order) of $T(\mu)$ and $S(\mu)$, which is uniquely determined by its potential functions, is again given by an increasing map. If in addition, the maps are $L$-Lipschitz with $L>0$, then the maximum is also given by an $L$-Lipschitz map.
\end{lemma}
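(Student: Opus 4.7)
The statement reduces to one on $[0,1]$ using quantiles. Let $F_\mu^{-1}$ denote the left-continuous inverse of the CDF of $\mu$, and set $\tilde T:=T\circ F_\mu^{-1}$, $\tilde S:=S\circ F_\mu^{-1}$, the quantile functions of $T(\mu)$ and $S(\mu)$, both non-decreasing on $(0,1)$. The hypothesis $\int T\,d\mu=\int S\,d\mu$ means the concave, decreasing integrated quantiles $G_T(\alpha):=\int_\alpha^1\tilde T\,dt$ and $G_S(\alpha):=\int_\alpha^1\tilde S\,dt$ agree at $\alpha=0,1$. A classical fact identifies the $\leq_c$-maximum $\rho$ of $T(\mu),S(\mu)$ as the measure whose integrated quantile $G^\ast$ is the concave envelope of $\max(G_T,G_S)$ on $[0,1]$, with quantile function $\tilde R:=-(G^\ast)'_+$.

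\textbf{Step 1: increasing factorization.} For each atom $x$ of $\mu$, on $[a,b]:=[F_\mu(x-),F_\mu(x)]$ both $\tilde T$ and $\tilde S$ are constant and so $G_T,G_S$ are affine. The chord $\ell$ of $G^\ast$ on $[a,b]$ lies below $G^\ast$ by concavity, yet dominates $\max(G_T,G_S)$ there: being affine, and satisfying $\ell(a)=G^\ast(a)\geq\max(G_T,G_S)(a)$ and $\ell(b)=G^\ast(b)\geq\max(G_T,G_S)(b)$, $\ell$ lies above the affine $G_T$ (resp.\ $G_S$) throughout $[a,b]$, hence above $\max(G_T,G_S)$. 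Replacing $G^\ast$ by $\ell$ on $[a,b]$ thus yields a concave majorant of $\max(G_T,G_S)$, so by minimality of the envelope $G^\ast\equiv\ell$ on $[a,b]$, whence $\tilde R$ is constant on $(a,b]$. Consequently $R(x):=\tilde R(F_\mu(x))$ is a well-defined, increasing map $\R\to\R$ with $R(\mu)=\rho$ (checked via $R\circ F_\mu^{-1}=\tilde R$ Lebesgue-a.s., using constancy of $\tilde R$ on atom intervals).

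\textbf{Step 2: Lipschitz property.} If $T,S$ are $L$-Lipschitz, I would show $\tilde R-L\,F_\mu^{-1}$ is non-increasing on $(0,1)$; combined with $F_\mu^{-1}(F_\mu(x))=x$ for $x\in\supp\mu$, this yields $R(y)-R(x)\leq L(y-x)$ on $\supp\mu$, and $R$ extends to an $L$-Lipschitz map on all of $\R$ by linear interpolation across any gaps in $\supp\mu$. To prove the claim, partition $[0,1]$ into the open intervals $J_k$ on which $G^\ast$ is strictly affine (hence $\tilde R$ constant) and the contact set $C:=\{G^\ast=\max(G_T,G_S)\}$, on which pointwise $\tilde R\in\{\tilde T,\tilde S\}$. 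Within each piece $\tilde R-LF_\mu^{-1}$ is non-increasing: on $J_k$ because $\tilde R$ is constant and $F_\mu^{-1}$ non-decreasing; on the $\tilde T$-portion of $C$ because $\tilde T(\beta)-\tilde T(\alpha)=T(F_\mu^{-1}(\beta))-T(F_\mu^{-1}(\alpha))\leq L(F_\mu^{-1}(\beta)-F_\mu^{-1}(\alpha))$ by Lipschitzness of $T$ and monotonicity of $F_\mu^{-1}$, and symmetrically on the $\tilde S$-portion. At junction points, tangency conditions forced by the minimality of the concave envelope (linear pieces must be tangent to $G_T$ or $G_S$ at their endpoints, and transitions within $C$ must satisfy a concave-kink condition) bound any upward jump of $\tilde R$ by the corresponding jump of $\tilde T$ or $\tilde S$, which in turn is bounded by $L$ times the jump of $F_\mu^{-1}$; so $\tilde R-LF_\mu^{-1}$ admits no upward jump. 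The main technical difficulty is carrying out this tangency analysis at all the various types of junction points, particularly when $\mu$ has atoms or $\supp\mu$ has gaps, where $\tilde T,\tilde S$ themselves have discontinuities.
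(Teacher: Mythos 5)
Your route is genuinely different from the paper's. The paper works with potential functions $u_{T(\mu)},u_{S(\mu)}$ and defines $T^\ast$ piecewise from $T$ and $S$ on the intervals separated by crossings of $u_{T(\mu)}$ and $u_{S(\mu)}$; you work on the quantile side with the integrated quantile functions $G_T,G_S$ and the concave envelope $G^\ast$ of $\max(G_T,G_S)$. These are dual descriptions of the $\leq_c$-maximum, and your identification of the maximum via the concave envelope is correct (the equal-means hypothesis is exactly what makes $G^\ast(0)=G_T(0)=G_S(0)$). Your Step~1 is essentially sound: the chord-replacement argument showing $G^\ast$ is affine on atom intervals of $\mu$ is clean, and it is precisely the mechanism that lets $\tilde R$ descend to a map on $\R$. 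One small fix is needed there: you set $R(x)=\tilde R(F_\mu(x))$ with $\tilde R=-(G^\ast)'_+$, but if $G^\ast$ has a kink at $b:=F_\mu(x)$ then $\tilde R(b)$ is the post-kink value, not the constant value of $\tilde R$ on $(F_\mu(x-),F_\mu(x))$ that $R(\mu)=\rho$ requires. Use $-(G^\ast)'_-(F_\mu(x))$ (or any value on the open interval) at atoms.

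The real gap is the one you flagged in Step~2. The claim that $\tilde R-LF_\mu^{-1}$ does not jump up at junctions is true but needs the following two observations, which you do not supply: (i) since $G_T$ and $G_S$ are both concave, $\max(G_T,G_S)$ can only have \emph{downward} (V-shaped) kinks at crossing points, so crossings lie strictly below $G^\ast$ and the contact set $C$ never passes directly from a $G_T$-piece to a $G_S$-piece without an intervening affine interval $J_k$; (ii) at the endpoints of $J_k$ the affine piece is tangent to the touching $G_T$ or $G_S$ (otherwise the envelope could be lowered or would fail to majorize), so $-(G^\ast)'$ has no extra jump there beyond the jump already present in $-\tilde T$ or $-\tilde S$, which is controlled by $L$ times the jump of $F_\mu^{-1}$. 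There is also a case where the contact switches from $G_T$ to $G_S$ across a discontinuity of $F_\mu^{-1}$ (a gap of $\supp\mu$); there you need the inequality $\tilde S(p_0+)\leq\tilde T(p_0+)$, which follows because $G_S\geq G_T$ just to the right of $p_0$ with equality at $p_0$. None of this is hard, but it is a genuine case analysis, and ``I would show'' plus an acknowledgment of the difficulty does not constitute a proof. For what it is worth, the paper's own treatment of the Lipschitz assertion is also a one-liner (``it follows by construction''), so you are in good company; but your write-up needs to actually carry out the junction analysis to close the argument.
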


\begin{proof}
	The maximum of $T(\mu)$ and $S(\mu)$ wrt.\ the convex order is uniquely determined by the maximum of its potential functions, i.e.\ $u_{S(\mu)}\vee u_{T(\mu)} = u_{S(\mu) \vee T(\mu)}$.
The right-hand side derivative of the potential function can be expressed by the cumulative distribution function, namely
	$\partial_+ u_\mu(x) = 2F_\mu(x) - 1$. 
	{	By continuity of the potential functions, we find a partition of $\R$ into at most countably many disjoint intervals $(I_k)_{k\in\N}$, where $u_{T(\mu)} = u_{S(\mu)}$ on $\partial I_k$, and restricted onto $I_k$ one of the following holds true:
	\begin{enumerate}[label = (\alph*)]
		\item \label{item 1} $u_{S(\mu)}|_{I_k} \geq u_{T(\mu)}|_{I_k}$,
		\item $u_{S(\mu)}|_{I_k} \leq u_{T(\mu)}|_{I_k}$.
	\end{enumerate}
	Suppose wlog \ref{item 1} holds, then
	\begin{align}\label{eq:cdf of max}
		\begin{split}
		F_{S(\mu)\vee T(\mu)}(x) = F_{T(\mu)}(x)\quad x \in (l_k,r_k).
	\end{split}
	\end{align}
	By monotonicity, we can define $T^*$ on $\tilde I_k := (T^{-1}(l_k) \vee S^{-1}(l_k), T^{-1}(r_k) \vee S^{-1}(r_k)]$\footnote{We use the convention that the maximum of the empty set equals $-\infty$.} by	
	\begin{align}\label{eq:def T^*}
	\begin{split}
		T^*(x) = \begin{cases} T(x) & x \in T^{-1}(I^k) \cap \tilde I_k,\\ r_k & \text{else.} \end{cases}
		\end{split}
	\end{align}
	Hence, $T^*$ is an increasing map, $F_{T^*(\mu)} = F_{S(\mu)\vee T(\mu)}$ and $S(\mu)\vee T(\mu)$ is given by the map $T^*$.} If $T$ and $S$ are in addition $L$-Lipschitz, it follows by construction that $T^*$ is $L$-Lipschitz.
	\end{proof}

\begin{lemma}\label{lem:maximum minimizes id-map}
Let $\eta_1\leq_c\eta_2$ and $T_2(\eta_2)\leq_c T_1(\eta_1)$, where $T_1,T_2$ are increasing and $1$-Lipschitz. Then $(id-T_1)(\eta_1)\leq_c(id-T_2)(\eta_2)$. 

In particular, if $T$ and $S$ are increasing $1$-Lipschitz maps s.t.\
	$ \int_\R T(x) \mu(dx) = \int_\R S(x)\mu(dx)$, and we denote by $R$ the increasing $1$-Lipschitz map with
	$R(\mu) = S(\mu)\vee T(\mu)$,
	which exists by Lemma~\ref{lem:maximum of maps}, then
	$(id-R)(\mu) \leq_c (id-S)(\mu)\wedge (id-T)(\mu)$.
\end{lemma}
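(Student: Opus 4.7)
The plan is to reduce the convex-order comparison to a manipulation of quantile integrals, exploiting that each increasing $1$-Lipschitz map $T_i$ forces a clean \emph{comonotone} decomposition of the quantile function of $\eta_i$.

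First I would set $\tau_i:=T_i(\eta_i)$ and $\xi_i:=(id-T_i)(\eta_i)$. Both $T_i$ and $id-T_i$ are increasing (the latter because $T_i$ is $1$-Lipschitz), so composing either with the left-continuous quantile function $F_{\eta_i}^{-1}$ produces an increasing function on $(0,1)$ whose push-forward under Lebesgue measure is $\tau_i$, respectively $\xi_i$. Since two increasing left-continuous functions with the same push-forward of Lebesgue measure must agree a.e., this identifies $F_{\tau_i}^{-1}=T_i\circ F_{\eta_i}^{-1}$ and $F_{\xi_i}^{-1}=(id-T_i)\circ F_{\eta_i}^{-1}$ a.e.\ on $(0,1)$; adding these gives the key pointwise identity $F_{\eta_i}^{-1}(u)=F_{\xi_i}^{-1}(u)+F_{\tau_i}^{-1}(u)$.

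Next I would invoke the standard quantile characterization of convex order on $\R$: $\mu\leq_c\nu$ iff the means agree and $\int_0^u F_\mu^{-1}(s)\,ds\geq\int_0^u F_\nu^{-1}(s)\,ds$ for all $u\in[0,1]$. The hypotheses $\eta_1\leq_c\eta_2$ and $\tau_2\leq_c\tau_1$ give inequalities in opposite senses for the quantile integrals of the $\eta_i$ and of the $\tau_i$; subtracting the identity above for $i=1,2$ and integrating yields
\begin{align*}
\int_0^u F_{\xi_1}^{-1}-\int_0^u F_{\xi_2}^{-1}=\Bigl(\int_0^u F_{\eta_1}^{-1}-\int_0^u F_{\eta_2}^{-1}\Bigr)+\Bigl(\int_0^u F_{\tau_2}^{-1}-\int_0^u F_{\tau_1}^{-1}\Bigr)\geq 0,
\end{align*}
with equality of means at $u=1$ inherited from the two hypotheses. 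This establishes the main claim $\xi_1\leq_c\xi_2$.

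The ``in particular'' part is then immediate: I would apply the just-proved statement twice with $\eta_1=\eta_2=\mu$, first with $(T_1,T_2)=(R,T)$ and then with $(T_1,T_2)=(R,S)$, in both cases the hypothesis $T_2(\mu)\leq_c R(\mu)$ being the defining property of $R(\mu)=S(\mu)\vee T(\mu)$. This yields $(id-R)(\mu)\leq_c(id-S)(\mu)$ and $(id-R)(\mu)\leq_c(id-T)(\mu)$ simultaneously, which is the stated inequality (the infimum $(id-S)(\mu)\wedge(id-T)(\mu)$ in convex order is well defined thanks to the equality $\int(id-S)\,d\mu=\int(id-T)\,d\mu$). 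The only delicate point is the quantile identification in the first step when $\eta_i$ has atoms or $T_i$ has flats, but this is handled by the standard a.e.-uniqueness of monotone representatives; everything else is algebraic.
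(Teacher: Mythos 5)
Your proof is correct and takes essentially the same approach as the paper: the paper first reduces by approximation to uniform measures on $n$ atoms and then applies the partial-sum characterization of convex order from \cite[Proposition 2.6]{GoRoSaSh18} to the increasingly ordered vectors of images $T_i(x^i_\ell)$ and differences $x^i_\ell - T_i(x^i_\ell)$, which is precisely the discrete incarnation of your comonotone quantile decomposition $F_{\eta_i}^{-1}=F_{\tau_i}^{-1}+F_{\xi_i}^{-1}$ combined with the integral criterion $\int_0^u F_\mu^{-1}\geq\int_0^u F_\nu^{-1}$. Working directly with quantile functions lets you bypass the approximation step, but the structural idea — split $id = T_i + (id-T_i)$ comonotonically and subtract the two convex-order inequalities — is identical, and your reduction of the second claim to two applications of the first with $(T_1,T_2)=(R,T)$ and $(R,S)$ matches the paper's one-line remark.
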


\begin{proof}
By approximation, it suffices to settle the case when $\eta_1,\eta_2$ are uniform measures on $n\in\mathbb N$ atoms. Let $x^i_1\leq x^i_2 \leq \dots\leq x^i_n$ denote the atoms of $\eta_i$. Then the vector $z^i:= (T_i(x^i_1),\dots,T_i(x^i_n))$ is ordered in an increasing way. What is more, the vector $y^i:=(x^i_1-z^i_1,\dots,x^i_n-z^i_n)$ is likewise ordered increasingly, since $id-T_i$ is an increasing map. By e.g.\ \cite[Proposition 2.6]{GoRoSaSh18} we know that 
$$\textstyle \forall k\leq n:\,\, \sum_{\ell\leq k}x^2_\ell\leq \sum_{\ell\leq k}x^1_\ell\,\,\,\, ,\,\,\,\,  \sum_{\ell\leq k}z^1_\ell\leq \sum_{\ell\leq k}z^2_\ell\,. $$
But then also $\sum_{\ell\leq k} x^2_\ell-z^2_\ell \leq \sum_{\ell\leq k} x^1_\ell-z^1_\ell $, so again by \cite[Proposition 2.6]{GoRoSaSh18} we conclude $(id-T_1)(\eta_1)\leq_c(id-T_2)(\eta_2)$. The second statement easily follows from the first one.
\end{proof}

%\begin{lemma}\label{lem:maximum minimizes id-map}
%	Let $\mu\in\mathcal P_1(\R)$, $T$ and $S$ increasing, 1-Lipschitz maps s.t.\
%	$ \int_\R T(x) \mu(dx) = \int_\R S(x)\mu(dx)$. 
%	Denote by $R$ the increasing, 1-Lipschitz map with
%	$R(\mu) = S(\mu)\vee T(\mu)$,
%	which exists by Lemma~\ref{lem:maximum of maps}.	Then
%	$(id-R)(\mu) \leq_c (id-S)(\mu)\wedge (id-T)(\mu)$.
%\end{lemma}
%
%\begin{proof}
%	Exchanging the roles of $S$ and $T$ it suffices to show that
%	\begin{align*}\textstyle 
%		\int_\R |x-R(x)-t| \mu(dx) \leq \int_\R |x-T(x)-t|\mu(dx),
%	\end{align*}
%	which is equivalent to
%	\begin{align*}\textstyle 
%		\int_\R (t+R(x)-x)^+\mu(dx) \leq \int_\R(t+T(x)-x)^+\mu(dx).
%	\end{align*}
%	Since $T(\mu)\leq_c R(\mu)$ we have\footnote{ A discrete version of this result can be found in \cite[Proposition 2.6]{GoRoSaSh18}.} for all $a \in \R$
%	$$\textstyle \int_{(-\infty,a]} R(x)\mu(dx) \leq \int_{(-\infty,a]} T(x)\mu(dx).$$
%	Note that $R-id$ is decreasing. Choosing $a = \sup\{x\colon t+R(x) - x \geq 0\}$ yields
%	\begin{align*}\textstyle 
%		\int_\R(t+R(x)-x)^+\mu(dx)\leq \int_{(-\infty,a]}(t + T(x) - x)\mu(dx) \leq \int_\R(t+T(x)-x)^+\mu(dx),
%	\end{align*}
%	which shows the assertion.	
%\end{proof}

\begin{proof}[Proof of Theorem~\ref{thm:wmr complementary}]
{ Existence of an admissible map $T$ which has slope 1 on each interval $T^{-1}(I)$, where $I$ is irreducible wrt $(T(\mu),\nu)$, was already shown in Theorem~\ref{thm:optimality implies fitting}.} Therefore, it remains to show that the map is maximal. Denote by $T$ the map given by Theorem~\ref{thm:optimality implies fitting} associated with an optimizer to \eqref{eq:GJ} and some strictly convex $\theta\colon\R\rightarrow\R$. Let $S$ be an arbitrary map in $M(\mu,\nu)$. Then Lemma~\ref{lem:maximum minimizes id-map} states that
$$(id-R)(\mu)\leq_c (id-T)(\mu),$$
where $R$ is defined as the increasing, 1-Lipschitz map such that
$R(\mu) = S(\mu)\vee T(\mu)$.
Additionally to existence, strict convexity of $\theta$ ensures $\mu$-almost sure uniqueness of $T$ in the sense that for any optimal coupling $\pi$ we have
$\textstyle \int_\R y \pi_x(dy) = T(x)\quad \mu\text{-a.s.}$
{ Thus, $R(\mu) = T(\mu)$ and $T = R$ $\mu$-almost surely.}
%{\color{red}JB: I don't understand this paragraph. Are we providing alternative proof of something we know already (if so, I suggest to delete), or is this proving something new?}
%
%{\color{green} GP: I think we can delete it ... then we also won't need Lemma 4.2}
%
%{\color{orange} As an alternative approach, we do not want to use Theorem~\ref{thm:fitting implies optimality}. Let $T$ be maximal in $M(\mu,\nu)$ and admissible with slope 1 on each interval $T^{-1}(I)$, where $I$ is irreducible wrt $(T(\mu),\nu)$.} Assume that $S$ is another admissible map. Since $T$ is maximal in $M(\mu,\nu)$, we find $T(\mu)\geq_c S(\mu)$. This implies that the partition into irreducible components of $(T(\mu),\nu)$ is finer than the one of $(S(\mu),\nu)$. Let $I_S$ be the closure of an open interval of the decomposition of $(S(\mu),\nu)$, then $S$ is injective on $I_S$ and
%$T|_{S^{-1}(I_S)} = T\circ S^{-1} \circ S|_{S^{-1}(I_S)}$. 
%Since $S(x)-S(y) = x-y$ on $S^{-1}(I_S)$, it is immediate that the map $T\circ S^{-1}$ is a contraction on $I_S$. Then, Lemma~\ref{lem:contraction convex order} states
%$T(\mu)|_{I_S} \leq_c S(\mu)|_{I_S}$, and therefore $T(\mu) = S(\mu)$.
\end{proof}

{
\begin{proof}[Proof of Theorem~\ref{thm:equivalence}]
	This is a direct consequence of Theorem~\ref{thm:optimality implies fitting}, which provides existence of a map with the desired geometric properties, and Theorem~\ref{thm:wmr complementary}, which provides the equivalence between the geometric properties and maximality.
\end{proof}
}

\section{On the reverse problem of Alfonsi, Corbetta, and Jourdain}

We aim to prove Theorem \ref{thm:equivalence2} pertaining the \emph{reverse} problem \eqref{eq:GJ3}.

\begin{lemma}\label{lem:minimum of maps}
	Let $\eta_1,\eta_2\in\mathcal P_1(\R)$, $T_1,T_2\colon\R\rightarrow\R$ be increasing maps with
	$$T_1(\eta_1) = T_2(\eta_2) =: \nu.$$
	Denote the minimum in convex order of $\eta_1$ and $\eta_2$ by $\eta$. Then there exists an increasing map $T^*$ such that $T^*(\eta) = \nu$. If in addition, the maps are $L$-Lipschitz with $L>0$, then the same holds true for $T^*$.
\end{lemma}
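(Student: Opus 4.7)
The plan is to translate the problem via left-continuous quantile functions and exploit the additive structure of the $\leq_c$-minimum. Set $G_i := F_{\eta_i}^{-1}$, $G := F_\nu^{-1}$, and $H_i(\alpha) := \int_0^\alpha G_i(s)\,ds$ for $\alpha \in (0,1)$. Since each $T_i$ is increasing with $T_i(\eta_i) = \nu$, we have $T_i \circ G_i = G$ Lebesgue-almost everywhere on $(0,1)$; in the Lipschitz case, continuity of $T_i$ propagates this identity to all one-sided limits.

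The first substantive step is to characterize the quantile function $G_\eta$ of $\eta$. The classical equivalence ``$\rho' \leq_c \rho''$ iff $\int_0^\alpha G_{\rho'} \geq \int_0^\alpha G_{\rho''}$ for all $\alpha \in (0,1)$ (with equality at $\alpha=1$)'' identifies the $\leq_c$-minimum $\eta$ as the measure whose primitive $H_\eta$ is the pointwise smallest convex function dominating both $H_1$ and $H_2$. Each $H_i$ is convex (its derivative $G_i$ is increasing), so $\max(H_1, H_2)$ is already convex, and one concludes $H_\eta = \max(H_1, H_2)$. This yields a natural decomposition of $(0,1)$ into at most countably many maximal intervals on each of which $G_\eta$ coincides with either $G_1$ or $G_2$.

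Next I would define $T^*$ on $\supp(\eta)$ by $T^*(G_\eta(\alpha)) := G(\alpha)$. This is well-defined: on any plateau of $G_\eta$ (atom of $\eta$), $G_\eta$ agrees with a plateau of the dominating $G_i$, hence $G = T_i \circ G_i$ is also constant there. On each maximal interval where $G_\eta = G_i$, the map $T^*$ coincides with $T_i$, and is thus increasing (and $L$-Lipschitz if $T_i$ is). At a crossover point $t_0$ between the $H_1$- and $H_2$-regimes, comparing one-sided derivatives of $H_1 - H_2$ at $t_0$ forces $G_1(t_0-) \leq G_2(t_0-) \leq G_2(t_0+)$, so $\supp(\eta)$ leaves a gap $[G_1(t_0-), G_2(t_0+)]$ between the two regimes. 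Extending $T^*$ to $\R$ by linear interpolation across this gap preserves monotonicity (and, as shown below, the Lipschitz constant).

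The crux is the Lipschitz estimate, which by telescoping reduces to the case when $[s,t]$ contains a single crossover $t_0$. Using the identity $T_i(G_i(t_0-)) = G(t_0-)$ (obtained from $T_i \circ G_i = G$ a.e.\ together with continuity of $T_i$), one computes
\begin{align*}
G(t) - G(s) &= [T_2(G_2(t)) - T_2(G_2(t_0-))] + [T_1(G_1(t_0-)) - T_1(G_1(s))] \\
&\leq L(G_2(t) - G_2(t_0-)) + L(G_1(t_0-) - G_1(s)) \\
&\leq L(G_2(t) - G_1(s)) = L(G_\eta(t) - G_\eta(s)),
\end{align*}
where the last inequality uses $G_2(t_0-) \geq G_1(t_0-)$. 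The main obstacle is precisely controlling this estimate across the (possibly countably many) crossover points; the telescoping identity above handles each crossover uniformly and accumulates correctly, so the claim follows.
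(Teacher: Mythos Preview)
Your approach is correct and genuinely dual to the paper's. The paper works on the $x$-axis with potential functions: it identifies $u_\eta$ as the convex hull of $u_{\eta_1}\wedge u_{\eta_2}$, partitions $\R$ into intervals on which $u_\eta$ either agrees with one $u_{\eta_i}$ (there $T^*:=T_i$) or lies strictly below both (there $F_\eta$ is constant and $T^*$ is taken constant), and then verifies $F_{T^*(\eta)}=F_\nu$ by a direct computation, asserting the Lipschitz property essentially without proof. You work on the quantile axis, where the picture is cleaner: since $H_\eta=\max(H_1,H_2)$ is automatically convex, the affine pieces of the paper's convex hull disappear (they become jumps of $G_\eta$, i.e.\ gaps in $\supp\eta$), and the definition $T^*(G_\eta(\alpha)):=G(\alpha)$ packages the pushforward identity for free. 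Your treatment of the Lipschitz bound is also more explicit than the paper's.

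One remark that would tighten your argument: the appeal to ``countably many maximal intervals'' and ``telescoping over crossovers'' is slightly misleading, since the contact set $\{H_1=H_2\}$ need not be discrete. But your displayed estimate does not actually require $t_0$ to be a crossover, nor $[s,t_0]$ and $[t_0,t]$ to lie in a single regime. The splitting
\[
G(t)-G(s)=\bigl[T_2(G_2(t))-T_2(G_2(t_0))\bigr]+\bigl[T_1(G_1(t_0))-T_1(G_1(s))\bigr]
\]
is valid at \emph{any} $t_0\in(s,t)$ (both brackets equal $G(t)-G(t_0)$ and $G(t_0)-G(s)$), and the final inequality only needs $G_1(t_0)\le G_2(t_0)$ together with $G_\eta(s)=G_1(s)$, $G_\eta(t)=G_2(t)$. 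When $s$ lies in the $H_1$-regime and $t$ in the $H_2$-regime one has $\int_s^t(G_2-G_1)>0$, so such a $t_0$ (with both $G_i$ continuous there) always exists. This handles the general case in one stroke, without any telescoping.
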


\begin{proof}
	Suppose there exist increasing maps $T_i$ and measures $\eta_i$, $i=1,2$, such that $T_1(\eta_1) = \nu = T(\eta_2)$. Then the potential function of the minimum $\eta$ of $\eta_1$ and $\eta_2$ wrt the convex order is given by the convex hull of $u_{\eta_1}$ and $u_{\eta_2}$. The potential function $u_\eta$ completely specifies the cumulative distribution function through $\partial_+ u_\eta = 2F_\eta - 1$. Thus, we can find a partition of $\R$ into countably many, disjoint intervals $I_k = [a_k,b_k)\cap \R$. For each $k\in\N$, we have $i\neq j \in \{1,2\}$ with $u_\eta(a_k) = u_{\eta_i}(a_k)$, $u_\eta(b_k) = u_{\eta_j}(b_k)$ such that one of the following holds
\begin{enumerate}[label=(\alph*)]
	\item\label{it:decomp map1}$u_\eta(x) = u_{\eta_i}(x)$ on $I_k$,
	\item\label{it:decomp map2}$F_{\eta}(a_k) = F_{\eta}(b_k-) = F_{\eta_j}(b_k-)$ and $u_\eta(x) < u_{\eta_1}(x) \wedge u_{\eta_2}(x)$ on $(a_k,b_k)$.
\end{enumerate}
According to this decomposition, we can define an increasing map $T^*$ via
\begin{align*}\textstyle
	T^*(x) = \begin{cases} T_i(x) & x \in I_k,~\text{\ref{it:decomp map1} holds,} \\ T_i(a_k) & x \in I_k,~\text{\ref{it:decomp map2} holds.}
	\end{cases}
\end{align*}
Note that $T^*$ is $L$-Lipschitz if $T_i$, $i=1,2$, are $L$-Lipschitz. Let $y\in\R$, due to the continuity of the maps $T_1$ and $T_2$, we can find  points $x_1,x_2\in \R$ with $p = F_{\nu}(y)$, $x_1 = F^{-1}_{\eta_1}(p)$ $x_2 = F^{-1}_{\eta_2}(p)$. Assume that $i = 1, j = 2$ with $x_1 \in I_k$. If \ref{it:decomp map1} holds, then we have $F_\eta(x_1) = F_{\eta_1}(x_1)$. Now presume that \ref{it:decomp map2} holds, then $F_{\eta_2}(b_k-) = F_{\eta}(a_k) \leq F_{\eta_1}(x_1)$. Then
\begin{align*}
	&F_{\eta_2}(b_k) = p \implies x_2 = b_k \text{ and }F_{\eta}(b_k) = p,\\
	&F_{\eta_2}(b_k) < p \implies \exists l \in \N \colon x_2 \in I_l\text{ s.t. $i=2$ and \ref{it:decomp map1} holds}.
\end{align*}
Hence, by monotonicity of the map $T^*$ we conclude $F_{T^*(\eta)} = F_\nu$.
\end{proof}

\begin{proof}[Proof of Theorem \ref{thm:equivalence2}]

	Define $\nu^*$ via the weak monotone rearrangement $T$ between $\mu$ and $\nu$, as $\mu$ on the contraction parts of $T$ and accordingly shifted $\nu$ on the affine (irreducible) intervals such that $\tilde T(\nu^*) = \nu$. Let $\eta\geq_c \mu$. Then for any strictly convex $\theta\colon\R\rightarrow\R$ and coupling $\pi^2 \in \Pi(\eta,\nu)$ we have $	\int_{\R\times\R} \theta(y-z)\pi^2(dy,dz) \geq \int_\R \theta(y - \int_\R z \pi^2_y(dz)) \eta(dy),
	$ 	with equality iff $\pi^2$ is actually given by a map. Hence, if the optimizer $\pi^2$ of $W_\theta(\eta,\nu)$ is not given by a map and $\pi^1 \in \Pi_M(\mu,\eta)$, we have
	$$\textstyle W_\theta(\eta,\nu) >  \int_\R \theta(x - \int_\R \int_\R z \pi^2_y(dz) \pi^1_x(dy)) \mu(dx) \geq V_\theta(\mu,\nu).$$
	Thus, by the structure of the weak monotone rearrangement, we deduce optimality of {$\nu^*$} for Problem \eqref{eq:GJ3}.
	To show uniqueness of \eqref{eq:GJ3}, assume that $\eta$ attains the minimum of \eqref{eq:GJ3} and the optimizer of $W_\theta(\eta,\nu)$ is given by the map $R$. For any martingale coupling $\pi^1 \in \Pi_M(\mu,\eta)$ we define a map by
	$$\textstyle L(x) := \int_\R R(y)\pi^1_x(dy).$$
	Then by optimality $L(\mu) = T(\mu)$ and, in particular,
	$$\textstyle \int_\R \theta(y - R(y)) \eta(dy) = \int_\R \theta(x-L(x)) \mu(dx) = \int_\R \theta(x-T(x))\mu(dx),$$
	which shows $L = T$ $\mu$-almost surely. By strict convexity, we have
	$$\textstyle y - R(y) = x - L(x) = x - T(x)\quad \pi^1\text{-a.s.}$$
	Since $\pi^1$ was arbitrary in $\Pi_M(\mu,\eta)$ we get that $R$ is affine with slope $1$ on $I$, whenever $I$ is an irreducible interval wrt $(\mu,\eta)$. Therefore $\eta$ and $\nu^*$ restricted to $I$ coincide. Hence, $\eta = \nu^*$.
	
	We finally show that $\nu^*$ is minimal in the convex order as stated. By Lemma~\ref{lem:minimum of maps}, we can assume $\mu\leq_c\eta\leq_c\nu^*$ and that $\eta$ can be pushed forward onto $\nu$ via an increasing $1$-Lipschitz map $S$. It follows by Lemma \ref{lem:maximum minimizes id-map} that $(id-S)(\eta)\leq_c (id-\tilde T)(\nu^*)$, so
	$$\textstyle V_\theta(\mu,\nu)\leq \textstyle\int \theta(x-S(x))\eta(dx)\leq \int \theta(x-\tilde T(x))\nu^*(dx)=V_\theta(\mu,\nu),$$
	and by the uniqueness obtained above we deduce $\eta=\nu^*$.
\end{proof}

\section{Stability of barycentric weak transport problems in multiple dimensions}

The final part of the article is concerned with stability of the weak optimal transport problem under barycentric costs, see \eqref{eq:GJ}. Unlike in the rest of the article we work here on $\R^d$. The final aim is to prove Theorem \ref{thm:stability'}.
%Our main result is Theorem \ref{thm:stability'}, containing Theorem \ref{thm:stability} as a special case.
%\begin{theorem}\label{thm:stability'}
%	Let $1\leq \rho<\infty$, $(\mu^k)_{k\in\N}\in\mathcal P_\rho(\R^d)^\N$, $(\nu^k)_{k\in\N}\in\mathcal P_1(\R^d)^\N$ and $\theta\colon\R^d\rightarrow\R$ convex and such that
%for some constant $c>0$ it holds
%	\begin{align}\label{eq:theta growth condition}
%		\theta(x) \leq  c(1 + |x|^\rho)\quad \forall x\in\R^d. 
%	\end{align}	
%	 If $\mu^k\rightarrow \mu$ in $\mathcal W_\rho$ and $\nu^k\rightarrow \nu$ in $\mathcal W_1$, then we have
%	$
%		 \lim_k V_\theta(\mu^k,\nu^k) = V_\theta(\mu,\nu)$.
%	If additionally $\theta$ is strictly convex, we have that
%	\begin{enumerate}
%		\item $\arg \min_{\eta\leq_c \nu^k} W_\theta(\mu^k,\nu^k) \rightarrow \arg \min_{\eta\leq_c \nu} W_\theta(\mu,\nu)\quad\text{in }\mathcal W_1$,
%		\item the sequence of maps $T^k$, where $T^k(\mu)\leq_c \nu$ and $V_\theta(\mu,\nu^k) = W_\theta(\mu,T^k(\mu))$, converges in $\mu$-probability to $T$, where $T(\mu)\leq_c \nu$ and $V_\theta(\mu,\nu) = W_\theta(\mu,\nu)$.
%	\end{enumerate}
%\end{theorem}
 %The reader may skip the subsequent text, where stability is finally established in Theorem~\ref{thm:stability'}, and jump to the end of this section, where one can find the proof of Theorem~\ref{thm:fitting implies optimality}.

%{\color{red}If $\mu_k=\mu$ for all $k$, can we get that the sequence of optimal 1-Lipschitz maps converges in $\mu$-probability? }
One surprising aspect of this result is that we only require $\nu^k\rightarrow \nu$ in $\mathcal W_1$ and not necessarily in $\mathcal W_\rho$. This relates to the conditional expectation in \eqref{eq:GJ} being `inside of $\theta$.' 
%\begin{corollary}
%	Let $T^k\colon \R^d \rightarrow \R^d$, $k\in\N$, be a sequence of monotone, 1-Lipschitz contractions, and $T^k(\mu)$ converges to $\nu$ in $\mathcal W_1$, where $\mu,\nu \in \mathcal P_1(\R^d)$. Then there exists a increasing, 1-Lipschitz contraction $T$ such that
%$		T(\mu) = \nu$ and $ T^k \rightarrow T $ in $\mu$-probability.
%\end{corollary}
%
%\begin{proof}
%	By \cite[Theorem 2.1]{GoJu18} we find that $V_{|\cdot|^2}(\mu,T^k(\mu)) = \mathcal W_2(\mu,T^k(\mu))^2$ for all $k\in\N$. Using Theorem~\ref{thm:stability'}, we find $
%		\lim_k V_{|\cdot|^2}(\mu,T^k(\nu)) = V_{|\cdot|^2}(\mu,\nu)$. The optimizer of the right-hand side is again given by a monotone, 1-Lipschitz contraction which we denote by $T$. Hence,  $V_{|\cdot|^2}(\mu,T(\mu)) = \mathcal W_2(\mu,T(\mu))^2$, and by Theorem~\ref{thm:stability'} we obtain convergence of $T^k$ to $T$ in probability.
%\end{proof}
We first prove an illuminating intermediate result:

\begin{proposition}\label{prop perturbation mu nu eta}
Let $1\leq \rho<\infty$, $(\mu^k)_{k\in\N}\in\mathcal P_\rho(\R^d)^\N$, $(\nu^k)_{k\in\N}\in\mathcal P_\rho(\R^d)^\N$ and $\theta\colon\R^d\rightarrow\R$ convex and satisfying the growth condition \eqref{eq:theta growth condition}. Suppose that $\mu^k\rightarrow \mu$ and $\nu^k\rightarrow \nu$ in $\mathcal W_\rho$, and that $\eta\leq_c \nu$. Then there exist $\eta^k\leq_c \nu^k$ such that
	\begin{itemize}
	\item[(i)] $\eta^k\rightarrow \eta$ in $\mathcal W_\rho$, 
	\item[(ii)] $\lim_k W_\theta(\mu^k,\eta^k) = W_\theta(\mu,\eta).$
	\end{itemize}
\end{proposition}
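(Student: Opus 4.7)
The plan is to construct $\eta^k$ via a gluing argument that simultaneously preserves the convex order relative to $\nu^k$ and delivers $\mathcal W_\rho$-convergence to $\eta$. First I would apply Strassen's theorem to $\eta\leq_c\nu$, obtaining a martingale coupling $\pi^M\in\Pi_M(\eta,\nu)$; separately, take $\gamma^k\in\Pi(\nu,\nu^k)$ optimal for $\mathcal W_\rho$. Gluing $\pi^M$ and $\gamma^k$ along the common marginal $\nu$ yields a probability $Q^k$ on $(\R^d)^3$ whose canonical coordinates $(X,Y,Z)$ satisfy $X\sim\eta$, $Y\sim\nu$, $Z\sim\nu^k$, with $\E_{Q^k}[Y\mid X]=X$ almost surely and $\E_{Q^k}|Y-Z|^\rho = \mathcal W_\rho(\nu,\nu^k)^\rho$. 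I would then set
\[
U^k := \E_{Q^k}[Z\mid X],\qquad \eta^k := \text{law}(U^k).
\]

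The convex order constraint falls out of the tower property: $\E_{Q^k}[Z\mid U^k]=U^k$, so $(U^k,Z)$ is a martingale coupling of $\eta^k$ and $\nu^k$, whence $\eta^k\leq_c\nu^k$. For (i), the coupling $(U^k,X)$ has marginals $\eta^k$ and $\eta$, and conditional Jensen gives
\begin{align*}
\mathcal W_\rho(\eta^k,\eta)^\rho \leq \E_{Q^k}|U^k-X|^\rho = \E_{Q^k}\bigl|\E_{Q^k}[Z-Y\mid X]\bigr|^\rho \leq \E_{Q^k}|Z-Y|^\rho = \mathcal W_\rho(\nu,\nu^k)^\rho \longrightarrow 0.
\end{align*}
This simultaneously provides the uniform bound $\sup_k\int|x|^\rho\,\eta^k(dx)<\infty$ needed later.

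For (ii), since $\mu^k\to\mu$ and $\eta^k\to\eta$ both in $\mathcal W_\rho$, and the growth hypothesis \eqref{eq:theta growth condition} combined with convexity of $\theta$ yields a two-sided bound $|\theta(x-y)|\leq c'(1+|x|^\rho+|y|^\rho)$, the classical stability of $W_\theta$ under $\mathcal W_\rho$-convergence of both marginals delivers $W_\theta(\mu^k,\eta^k)\to W_\theta(\mu,\eta)$. The main obstacle I anticipate is precisely the convex-order-preserving construction itself: we need a single $\eta^k$ that is at once close to $\eta$ in $\mathcal W_\rho$ and dominated by $\nu^k$. The gluing approach resolves this cleanly, since the tower property encodes the martingale dominance while conditional Jensen packages the entire $\mathcal W_\rho$ bound into the single already-converging quantity $\mathcal W_\rho(\nu,\nu^k)$; reducing (ii) to a standard OT-stability invocation is then routine.
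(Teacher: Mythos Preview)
Your proposal is correct and is essentially the paper's own proof, phrased probabilistically: the paper likewise glues a martingale coupling of $(\eta,\nu)$ with a $\mathcal W_\rho$-optimal coupling of $(\nu,\nu^k)$, defines $\eta^k$ as the law of the conditional expectation of the $\nu^k$-coordinate given the $\eta$-coordinate, and bounds $\mathcal W_\rho(\eta,\eta^k)$ by $\mathcal W_\rho(\nu,\nu^k)$ via two applications of Jensen. The reduction of (ii) to classical OT stability under $\mathcal W_\rho$-convergence of both marginals is also exactly how the paper proceeds.
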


\begin{proof}
It is well-known that $(i)$ together with the stated convergence of the $\mu^k$'s implies $(ii)$, so we proceed to prove the former. Let $\pi^k$ be an optimal coupling attaining $W_\rho(\nu,\nu^k)$. Let $M$ be any martingale coupling with first marginal $\eta$ and second marginal $\nu$, the existence of which is guaranteed by the assumption $\eta\leq_c \nu$ together with Strassen's theorem. We convene on the notation $M(dx,dy)$ and $\pi(dy,dz)$,  and define the measure
$$P(dx,dy,dz)=M_y(dx)\pi^k_y(dz)\nu(dy).$$
This measure has $\eta$, $\nu$ and $\nu^k$ as first, second and third marginals. We next define $R^k(x)$ as the conditional expectation under $P$ of the third variable given the first one, namely
$$\textstyle R^k(x)=\int\int z\,\pi^k_y(dz)M_x(dy).$$
Next we introduce $\eta^k:=R^k(\eta)$ so by definition $\eta^k\leq_c\nu^k$. Finally
\begin{equation}\label{eq chain rho}
\begin{split}
\textstyle W_\rho(\eta,\eta^k)^\rho &\leq \textstyle  \int |x-R^k(x)|^\rho \eta(dx) = \int \left |\int y \, M_x(dy) - \int \left( \int z\,\pi^k_y(dz)\right) \, M_x(dy) \right |^\rho \eta(dx) \\
&\leq\textstyle  \int \left |y-\int z\,\pi^k_y(dz) \right |^\rho \nu(dy)  \leq  \int\int |y-z|^\rho \pi^k(dy,dz) \,=\, W_\rho(\nu,\nu^k),
\end{split}
\end{equation}
by the martingale property and two applications of Jensen's inequality. The desired conclusion follows.
\end{proof}

\begin{remark}\label{rem techni}
In the context of the previous proposition, if $\eta$ is supported in finitely many atoms, then the condition that $\nu^k\rightarrow \nu$ in $\mathcal W_\rho$ can be relaxed to convergence in $\mathcal W_1$. To wit, if $\eta = \sum_{i=1}^\ell \alpha_i\delta_{x^i}$, one can take $\rho=1$ in \eqref{eq chain rho} and prove 
$$\forall i\leq \ell:\,\,|x^i-R^k(x^i)|\leq (\min\{\alpha_j\})^{-1}W_1(\nu,\nu^k),$$
so taking $\rho$-power and integrating w.r.t.\ $\eta$ we get $W_\rho(\eta,\eta^k)^\rho\leq K\, W_1(\nu,\nu^k)^\rho\to 0$.
\end{remark}

The previous remark shows that we need to reduce to the finite-support setting. We carry to this in the next two lemmas:

\begin{lemma}\label{lem:convex order approximation mean}
Let $\eta \in \mathcal P_1(\R^d)$. Then for any $\epsilon>0$ there is a compactly supported, positive measures $\tilde \eta$ with
\begin{align}\label{eq:coapprox properties}\textstyle 
	\tilde \eta \leq \eta,\quad \tilde \eta(\R^d)\geq 1-\epsilon,\quad
	\int_{\R^d} z \eta(dz) = \frac{1}{\tilde \eta(\R^d)}\int_{\R^d} z\tilde\eta(dz).
\end{align}
\end{lemma}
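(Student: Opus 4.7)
My plan is to write $\tilde\eta=\eta_R-\rho$, where $\eta_R:=\eta\mathbf{1}_{B_R}$ is a truncation to a large ball and $\rho$ is a small positive sub-measure of $\eta_R$ whose role is to fix the barycentric constraint exactly. First I would translate the problem so that $b:=\int z\,d\eta(z)=0$; if $\eta=\delta_0$ I simply take $\tilde\eta:=\eta$ and stop. Otherwise I want to reduce to the situation where $0$ lies in the interior of $\conv(\supp\eta)$: if $0$ were merely on the boundary, a supporting hyperplane at $0$ would force $\supp\eta$ into that hyperplane (since $\int z\,d\eta=0$ and $\supp\eta$ lies in the corresponding closed half-space), and I could iterate the construction within the lower-dimensional affine span.

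For the main construction I will set $c_R:=\eta_R(\R^d)$ and $\Delta_R:=\int z\,d\eta_R$, and rely on the facts $c_R\to 1$ and $|\Delta_R|\leq\int_{|z|>R}|z|\,d\eta\to 0$, which follow from $\eta\in\mathcal P_1(\R^d)$ by dominated convergence. Having fixed $p_0,\dots,p_d\in\supp\eta$ with $B_r(0)\subset\conv(p_0,\dots,p_d)$ for some $r>0$, I will choose disjoint open neighborhoods $U_i\ni p_i$ small enough that the local barycenters $\tilde p_i:=\eta(U_i)^{-1}\int_{U_i}z\,d\eta$ still satisfy $B_{r/2}(0)\subset\conv(\tilde p_0,\dots,\tilde p_d)$; note $q_i:=\eta(U_i)>0$ because $p_i\in\supp\eta$. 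Picking $R$ large enough that $U_i\subset B_R$, $c_R\geq 1-\eps/2$, and $m:=2|\Delta_R|/r\leq\eps/2$, I will write $\Delta_R/m=\sum_i\lambda_i\tilde p_i$ with $\lambda_i\geq 0$, $\sum_i\lambda_i=1$, and set $\rho:=\sum_i(m\lambda_i/q_i)\,\eta\mathbf{1}_{U_i}$. The coefficients $m\lambda_i/q_i$ lie in $[0,1]$ for $R$ large (as $m\to 0$ while $q_i$ is fixed), so $\rho\leq\eta_R$; by construction $\rho(\R^d)=m$ and $\int z\,d\rho=\Delta_R$. Then $\tilde\eta:=\eta_R-\rho$ is supported in $B_R$, has mass $c_R-m\geq 1-\eps$, and satisfies $\int z\,d\tilde\eta=\Delta_R-\Delta_R=0$, as required after undoing the translation.

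The main obstacle will be the convex-geometric setup: I need $0$ to lie interior to $\conv(\supp\eta)$ (handled by the dimension-reduction argument above) so that the required correction of magnitude $|\Delta_R|$ can be produced as a nonnegative combination of local barycenters of neighborhoods $U_i$, using only $O(|\Delta_R|)$ units of $\eta$-mass. The crucial quantitative point is that once the $U_i$ and $r$ are fixed, they depend only on $\eta$ (and not on $R$), while $|\Delta_R|\to 0$; thus the ratio of removed mass to barycenter shift stays bounded, ensuring $\rho(\R^d)\to 0$ as $R\to\infty$ and making the mass loss negligible for any prescribed $\eps$.
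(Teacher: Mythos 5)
Your proof is correct and follows essentially the same strategy as the paper: truncate $\eta$ to a compact set and then restore the barycenter by removing a small amount of mass near support points that positively span $\R^d$ around the barycenter. The paper first discretizes $\eta$ to an atomic $\eta^\delta$ on a grid and removes mass from $2d$ atoms whose positions positively span $\R^d$, whereas you work directly with $\eta$, use a simplex of $d+1$ support points with local barycenters $\tilde p_i$ of small neighborhoods $U_i$, and remove proportional slices $\eta\1_{U_i}$; this avoids the paper's (only sketched) transfer step from $\eta^\delta$ back to $\eta$. You also spell out the dimension-reduction argument via a supporting hyperplane, which the paper compresses into a ``wlog.'' The crucial quantitative observation is identical in both: once the spanning points and the radius $r$ are fixed, the mass that must be removed is $O(|\Delta_R|)$, which tends to $0$ as $R\to\infty$, so the total loss of mass can be kept below $\epsilon$.
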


\begin{proof}
	We first partition $\R^d$ into countable, disjoint $d$-dimensional cubes $(Q^\delta_k)_{k\in\N}$ of length $\delta > 0$. Define an approximation $\eta^\delta$ of $\eta$ by
	\begin{align*}\textstyle 
	\eta^\delta := \sum_{k\in\N} \delta_{z^\delta_k}\,\eta(Q^\delta_k),\quad z^\delta_k := \begin{cases}\frac{1}{\eta(Q^\delta_k)}\int_{Q^\delta_k} z \eta(dz)& \eta(Q^\delta_k) > 0, \\0 & \text{else}.\end{cases}
	\end{align*}
	Note that $\eta^\delta\leq_c \eta$ and $\eta^\delta \rightarrow \eta$ in $\mathcal W_1$ when $\delta\searrow 0$. If there exists an approximation $\eta^\delta$ such that the assertion holds, then it is straightforward to construct the corresponding measure for $\eta$, which in turn satisfies the assertion with respect to $\eta$. Wlog, we may assume that
	\begin{align*}\textstyle 
		\left\{\sum_{i=1}^{2d} \alpha_i v_i\colon (\alpha_i)_{i=1}^{2d} \in \R_+^{2d}, v_1,\ldots, v_{2d} \in \Big\{x-\bar z \in \R^d\colon x \in \supp(\eta)\Big\}\right\} = \R^d,
	\end{align*}
	where $\bar z$ denotes the barycenter of $\eta$. Then we can find $\delta>0$ such that
	\begin{align*}\textstyle 
		\left\{\sum_{i=1}^{2d} \alpha_i v_i\colon (\alpha_i)_{i=1}^{2d} \in \R_+^{2d}, v_1,\ldots, v_{2d} \in \Big\{x-\bar z \in \R^d\colon x \in \supp(\eta^\delta)\Big\}\right\} = \R^d.
	\end{align*}
	Let $z^\delta_{n_1},\ldots,z^\delta_{n_{2d}}$ span $\R^d$ in the sense above and 
	$$\eta^{\delta}(z^\delta_{n_j}) = \eta(Q^\delta_{n_j}) > 0\quad j=1,\ldots,2d.$$
	For any $\epsilon>0$ there is a $\tilde \epsilon \in (0,\epsilon)$ such that
	\begin{align*}\textstyle 
		\left\{\sum_{i=1}^{2d} \alpha_i z_{n_i}\colon (\alpha_i)_{i=1}^{2d}\in \R_+^{2d},\sum_{i=1}^{2d} \alpha_i < \epsilon\right\} \supset B_{\tilde \epsilon}(0).
	\end{align*}
	Besides, there exists a compact set $K\subset \R^d$ such that
	\begin{align*}\textstyle 
		\eta^\delta(K^c)<\tilde\epsilon,\quad \left|\bar z - \int_{K} z \eta^\delta(dz)\right| < \tilde \epsilon
	\end{align*}
	and $z^\delta_{n_1},\ldots,z^\delta_{n_{2d}} \in K$. Therefore, we find $(\tilde \alpha_i)_{i=1}^{2d} \in \R_+^{2d}$ with
	\begin{align*}\textstyle 
		\bar z - \int_K z\eta^{\delta}(dz) = \sum_{i=1}^{2d} \tilde \alpha_i z_{n_i},\quad \sum_{i=1}^{2d} \tilde \alpha_i < \epsilon.
	\end{align*}
	If $\epsilon$ is chosen smaller than $\eta^\delta(z_{n_i}^\delta)$ for all $i=1,\ldots,2d$, we can define the $\tilde \eta^\delta$ via
	\begin{align}{\textstyle {}\hspace{6cm}
		\tilde \eta^\delta := \eta^\delta{\upharpoonright_K} - \sum_{i=1}^{2d} \tilde \alpha_i \delta_{z_{n_i}}. \hspace{6cm}}\qedhere
	\end{align}
\end{proof}

\begin{lemma}\label{lem:Wtheta approximation}
	%Let $\eta\leq_c\nu\in\mathcal P_1(\R^d)$,
	Let $\mu,\eta\in\mathcal P_\rho(\R^d)$ and $\theta\colon\R^d\rightarrow \R$ convex satisfying the growth condition \eqref{eq:theta growth condition}.
	Then there exists a sequence $(\eta^k)_{k\in\N}$ of finitely supported measures with
	$\eta^k\leq_c\eta$, $\eta^k\rightarrow\eta$ in $\mathcal W_\rho$ and $W_\theta(\mu,\eta^k)\rightarrow W_\theta(\mu,\eta)$.
\end{lemma}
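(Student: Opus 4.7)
The plan is a two-step approximation followed by a standard continuity argument, the convex order being preserved throughout via Jensen's inequality.

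\textbf{Step 1 (tail truncation):} For $R > 0$ and $B_R$ the open ball of radius $R$ about the origin, set $\beta_R := \eta(B_R^c)$, $m_R := \beta_R^{-1}\int_{B_R^c} z\,\eta(dz)$, and
\[
\eta_R := \eta{\upharpoonright_{B_R}} + \beta_R\,\delta_{m_R}.
\]
Jensen on the tail yields $\eta_R \leq_c \eta$, while coupling $\eta{\upharpoonright_{B_R^c}}$ to the atom at $m_R$ gives
\[
W_\rho^\rho(\eta,\eta_R) \leq \int_{B_R^c} |z - m_R|^\rho\,\eta(dz) \leq 2^\rho \int_{B_R^c} |z|^\rho\,\eta(dz),
\]
where we used the bound $\beta_R |m_R|^\rho \leq \int_{B_R^c} |z|^\rho\,\eta(dz)$ (Jensen once more). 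The right-hand side vanishes as $R\to\infty$ because $\eta \in \mathcal P_\rho(\R^d)$.

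\textbf{Step 2 (cube discretization):} For $\delta > 0$ partition $B_R$ into finitely many cubes $(Q_i)$ of side $\delta$ and let $\bar z_i$ denote the barycenter of $\eta{\upharpoonright_{Q_i}}$ when $\eta(Q_i) > 0$. Set
\[
\eta_R^\delta := \sum_{i:\,\eta(Q_i)>0} \eta(Q_i)\,\delta_{\bar z_i} + \beta_R\,\delta_{m_R}.
\]
This measure is finitely supported; Jensen applied on each $Q_i$ gives $\eta_R^\delta \leq_c \eta_R \leq_c \eta$; and since $\bar z_i \in Q_i$ the obvious coupling yields $W_\rho(\eta_R^\delta,\eta_R) \leq \delta\sqrt{d}$. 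A diagonal choice $R_k \to \infty$, $\delta_k \to 0$ therefore produces finitely supported $\eta^k$ with $\eta^k \leq_c \eta$ and $\eta^k \to \eta$ in $W_\rho$.

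\textbf{Step 3 ($W_\theta$-convergence):} With $\mu$ held fixed, $\eta^k \to \eta$ in $W_\rho$, and $\theta$ satisfying the growth bound \eqref{eq:theta growth condition}, classical stability of Kantorovich transport yields $W_\theta(\mu,\eta^k) \to W_\theta(\mu,\eta)$: the upper bound follows by gluing an optimizer of $W_\theta(\mu,\eta)$ with a $W_\rho$-optimal coupling of $(\eta,\eta^k)$, and the lower bound by extracting a $W_\rho$-cluster point of optimal couplings $\pi^k$ whose $\rho$-moments are uniformly integrable. The only genuinely non-routine point is delivering $W_\rho$-convergence (rather than just $W_1$) in the truncation step, which is precisely what the second Jensen estimate above achieves by exploiting the finite $\rho$-th moment of $\eta$; everything else is bookkeeping.
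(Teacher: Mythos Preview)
Your proof is correct and follows essentially the same route as the paper: tail truncation by collapsing $\eta{\upharpoonright_{B_R^c}}$ to its barycenter, followed by cube-barycenter discretization on the compact part (both steps preserving $\leq_c$ via Jensen), and then classical stability of $W_\theta$ under $\mathcal W_\rho$-convergence of the second marginal. The only cosmetic differences are that the paper uses a generic compact set and sets of small diameter rather than balls and cubes, and is slightly less explicit about the $2^\rho$ moment estimate in the truncation step.
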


\begin{proof}
	For any $\epsilon>0$ we find a compact set $K_\epsilon\subset \supp(\eta)$ such that
	$\int_{K_\epsilon^c}|y|^\rho\eta(dy) < \epsilon$.
	For any $\delta>0$, the set $K_\epsilon$ can be covered by finitely many, disjoint sets $(A^{\epsilon,\delta}_i)_{i=1}^{N_{\epsilon,\delta}}$ with diameter smaller than $\delta$ and $K_\epsilon = \bigcup_i A^{\epsilon,\delta}_i$. Define the measure $\eta^{\epsilon,\delta}$ by
\begin{align*}\textstyle 
	\eta^{\epsilon,\delta} = \delta_{z_\epsilon} \eta(K_\epsilon^c)  + \sum_{i=1}^{N_\delta} \delta_{z^{\epsilon,\delta}_i} \eta(A^{\epsilon,\delta}_i),
\end{align*}
where the points $z_\epsilon$ and $(z_i^{\epsilon,\delta})_{i=1}^{N_{\epsilon,\delta}}$ are given by
\begin{align*}\textstyle 
	z_\epsilon := \frac{1}{\eta(K_\epsilon^c)} \int_{K_\epsilon^c} y \eta(dy),\quad
	z_i^{\epsilon,\delta} := \frac{1}{\eta(A^{\epsilon,\delta}_i)}\int_{A^{\epsilon,\delta}_i} y \eta(dy).
\end{align*}
By construction, there exists a martingale coupling between $\eta^{\epsilon,\delta}$ and $\eta$, thus, $\eta^{\epsilon,\delta}\leq_c\eta$. % \leq_c \nu$. 
 Note that $\theta$ restricted to $K_\epsilon$ is Lipschitz continuous. Then drawing the limit $\delta \rightarrow 0$ yields
\begin{align*}\textstyle 
	\sum_{i=1}^{N_{\epsilon,\delta}} \delta_{z^{\epsilon,\delta}_i} \eta(A_i^{\epsilon,\delta}) \rightarrow \eta|_{K_\epsilon}\text{ in }\mathcal W_\rho,
\end{align*}
and by convexity,
%\begin{align*}
$\textstyle 
	|z_\epsilon|^\rho\eta(K_\epsilon^c) \leq \int_{K_\epsilon^c} |y|^\rho\eta(dy).$
%\end{align*}
Choosing $\delta(\epsilon)$ sufficiently small, we have $$\mathcal W_\rho(\eta,\eta^{\epsilon,\delta(\epsilon)}) \leq 2\epsilon\text{ and } \eta^{\epsilon,\delta(\epsilon)} \rightarrow \eta \text{ in }\mathcal W_\rho.$$
By the growth condition \eqref{eq:theta growth condition} and stability, we obtain $W_\theta(\mu,\eta^{\epsilon,\delta(\epsilon)})\to W_\theta(\mu,\eta)$.
\end{proof}

We can now prove a version of Proposition \ref{prop perturbation mu nu eta} under weaker assumptions:
\begin{lemma}\label{prop:Wtheta convex order convergence}
	Let $(\nu^k)_{k\in\N}$ be a sequence in $\mathcal P_1(\R^d)$ and let $(\mu^k)_{k\in\N}$ be a sequence $\mathcal P_\rho(\R^d)$  with $		\nu^k\rightarrow \nu $ in $\mathcal W_1$, $ \mu^k\rightarrow \mu $ in $\mathcal W_\rho$, 	where $\rho\geq 1$, and let $\theta\colon\R^d\rightarrow \R$ be a convex functions satisfying the growth constraint \eqref{eq:theta growth condition}. Then for any $\eta\leq_c\nu$ we find a sequence of $\eta^k\leq_c\nu^k$ such that $W_\theta(\mu^k,\eta^k) \rightarrow W_\theta(\mu,\eta)$ and $ \eta^k \rightarrow \eta $ in $ \mathcal W_1$.
\end{lemma}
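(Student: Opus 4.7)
The plan is to bootstrap the stronger Proposition~\ref{prop perturbation mu nu eta} (which requires $\nu^k \to \nu$ in $\mathcal W_\rho$) to the current weaker hypothesis $\nu^k \to \nu$ in $\mathcal W_1$, by exploiting Remark~\ref{rem techni}: the remark says precisely that the $\mathcal W_1$-versus-$\mathcal W_\rho$ gap vanishes when the target measure is finitely supported. So as a first step I would use Lemma~\ref{lem:Wtheta approximation} to approximate $\eta$ by finitely supported measures $(\tilde\eta^m)_{m\in\N}$ with $\tilde\eta^m\leq_c\eta$ (hence $\tilde\eta^m\leq_c\nu$), $\tilde\eta^m\to\eta$ in $\mathcal W_\rho$, and $W_\theta(\mu,\tilde\eta^m)\to W_\theta(\mu,\eta)$.

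Next I fix $m$ and apply Remark~\ref{rem techni} inside the proof of Proposition~\ref{prop perturbation mu nu eta} with $\eta$ replaced by $\tilde\eta^m$: because $\tilde\eta^m$ is supported on finitely many atoms, the construction there produces $\eta^{m,k}\leq_c\nu^k$ satisfying $\eta^{m,k}\to\tilde\eta^m$ in $\mathcal W_\rho$ as $k\to\infty$, even though we only have $\nu^k\to\nu$ in $\mathcal W_1$. Combined with $\mu^k\to\mu$ in $\mathcal W_\rho$ and the growth condition \eqref{eq:theta growth condition} on $\theta$, classical stability of the optimal transport cost $W_\theta$ under joint $\mathcal W_\rho$-convergence of the marginals yields $W_\theta(\mu^k,\eta^{m,k})\to W_\theta(\mu,\tilde\eta^m)$ as $k\to\infty$.

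Finally, I would collapse the two-index family $\eta^{m,k}$ into the desired single sequence $\eta^k$ by a diagonal argument: choose an increasing sequence $(k_m)_{m\in\N}\subset\N$ such that for every $k\geq k_m$ we have both $\mathcal W_1(\eta^{m,k},\tilde\eta^m)\leq 1/m$ and $|W_\theta(\mu^k,\eta^{m,k})-W_\theta(\mu,\tilde\eta^m)|\leq 1/m$, set $m(k):=\max\{m:k_m\leq k\}$ (so $m(k)\to\infty$), and define $\eta^k:=\eta^{m(k),k}$. By construction $\eta^k\leq_c\nu^k$, and two triangle inequalities combined with Step~1 deliver $\mathcal W_1(\eta^k,\eta)\to 0$ and $W_\theta(\mu^k,\eta^k)\to W_\theta(\mu,\eta)$. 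The only conceptually delicate point is the Step~1 reduction to finite support \emph{simultaneously} in the convex order and in the $W_\theta$-cost; that is precisely the role of Lemma~\ref{lem:Wtheta approximation}, so once it is in hand the rest of the argument is essentially bookkeeping.
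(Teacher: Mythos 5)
You take essentially the same route as the paper --- reduce to finitely supported approximants of $\eta$, invoke Remark~\ref{rem techni} for the finite-support case, then combine with classical $\mathcal W_\rho$-stability of $W_\theta$ and a diagonal extraction --- but your Step~1 has a genuine gap: Lemma~\ref{lem:Wtheta approximation} requires $\eta\in\mathcal P_\rho(\R^d)$, and that hypothesis is \emph{not} available here. The present lemma only assumes $\eta\leq_c\nu$ with $\nu\in\mathcal P_1(\R^d)$, so for $\rho>1$ you have no control on the $\rho$-th moment of $\eta$. In particular the assertion ``$\tilde\eta^m\to\eta$ in $\mathcal W_\rho$'' in your Step~1 may simply be vacuous, since $\mathcal W_\rho(\tilde\eta^m,\eta)$ can equal $+\infty$ identically; and the $\mathcal W_\rho$-stability you then invoke to pass from $W_\theta(\mu,\tilde\eta^m)$ to $W_\theta(\mu,\eta)$ is likewise unavailable.

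The paper's proof inserts an extra reduction precisely to circumvent this. It first applies Lemma~\ref{lem:convex order approximation mean} (which only needs $\eta\in\mathcal P_1$) to replace $\eta$ by a \emph{compactly supported} $\hat\eta\leq_c\eta$ with the same barycenter, $\mathcal W_1(\hat\eta,\eta)<\epsilon$, and $W_\theta(\mu,\hat\eta)$ comparable to $W_\theta(\mu,\eta)$; being compactly supported, $\hat\eta$ automatically lies in $\mathcal P_\rho$ for every $\rho$, and only then is Lemma~\ref{lem:Wtheta approximation} applied to $\hat\eta$ to reach finite support. If you add this intermediate compactification step before your Lemma~\ref{lem:Wtheta approximation} reduction, the remainder of your argument (Remark~\ref{rem techni} applied to the finitely many atoms, then $\mathcal W_\rho$-stability of $W_\theta$, then the diagonal extraction) goes through and agrees with the paper's proof.
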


\begin{proof}
	Wlog\ assume that $\theta$ is positive.
	By Lemma~\ref{lem:convex order approximation mean}, we can find for any $\epsilon>0$ a compactly supported $\hat \eta = \tilde \eta + (1-\tilde\eta(\R^d))\delta_{\bar z} \in \mathcal P_\rho(\R^d)$ with $\hat \eta \leq_c \eta$, $\mathcal W_1(\hat\eta,\eta) < \epsilon$ and
	\begin{align*}\textstyle 
		W_\theta(\mu,\hat \eta) \leq W_\theta(\mu,\eta) + c\int_{\R^d}1+|x-\bar z|^\rho \mu(dx) < \infty,
	\end{align*}
	where $\bar z := \int_{\R^d} y \eta(dy)$.
	Using stability of classical optimal transport, see \cite[Theorem 5.20]{Vi09}, we may assume that $|W_\theta(\mu,\hat \eta) - W_\theta(\mu,\eta)| < \epsilon$. 	By Lemma~\ref{lem:Wtheta approximation} we may reduce to the case of finitely supported $\hat \eta$. We conclude the proof with Remark \ref{rem techni}.
%	, i.e.\ there are $N\in\N$ points $x_1,\dots,x_N\in \R^d$ and weights $\alpha_1,\dots,\alpha_N\in [0,1]$ with
%	\begin{align}\label{eq:discrete eta}
%	\hat \eta(dx) = \sum_{i=1}^N \alpha_i \delta_{x_i}(dx),\quad \sum_{i=1}^N \alpha_i = 1.
%	\end{align}
%	Let $\pi^k\in\Pi(\nu,\nu^k)$ be an optimal transference plan of $\mathcal W_1(\nu,\nu^k)$. By Strassen's theorem there exist martingale couplings between $\hat \eta$ and $\nu$. Let $(\chi_i)_{i=1}^N$ be the disintegration of such a coupling and $(\alpha_i)_{i=1}^N$ the corresponding weights. For $i=1,\ldots,N$ set
%	\begin{align*}
%		\chi_i^k(dy) := \int \pi_x(dy)\chi_i(dx),\quad x^k_i := \int y \chi_i^k(dy).
%	\end{align*}
%	%where by dominated convergence $x^k_i$ is well-defined. 
%	The measure
%	\begin{align}\label{eq:discrete etak}
%		\eta^k(dx) := \sum_{i=1}^N \alpha_i \delta_{x^k_i}
%	\end{align}
%	is by construction in convex order to $\nu^k$. For $i=1,\dots,N$ we have
%	\begin{align}\label{eq:discrete eta point convergence}
%		|x^k_i - x_i| \leq \int_{\R^d\times\R^d} |y-x|\pi_x^k(dy)\chi_i(dx) \leq \mathcal W_1(\nu^k,\nu),
%	\end{align}
%	which converges to $0$ as $k$ goes to infinity. Therefore, $\eta^k\rightarrow \hat\eta$ in $\mathcal W_\rho$ and 
%	$$W_\theta(\mu^k,\eta^k)\rightarrow W_\theta(\mu,\hat\eta).$$
\end{proof}

Finally we can give the pending proof of Theorem \ref{thm:stability'}:

\begin{proof}[Proof of Theorem \ref{thm:stability'}]
	Lower-semicontinuity of the map $(\mu,\nu)\mapsto V_\theta(\mu,\nu)$ follows from \cite[Theorem 1.3]{BaBePa18}.  %Let $\eta\leq_c\nu$ be discrete as in \eqref{eq:discrete eta}. 
	By \cite[Lemma 6.1]{BaBePa18} we have
	\begin{align}\label{eq:Vtheta=infWtheta}\textstyle 
		V_\theta(\mu,\nu) = \inf_{\eta\leq_c\nu} W_\theta(\mu,\eta),
	\end{align}
	where the infimum is even attained for a measure $\eta\leq_c\nu$. By Lemma~\ref{prop:Wtheta convex order convergence} we find a sequence $\eta^k\leq_c\nu^k$, so that again using \cite[Lemma 6.1]{BaBePa18} we find
	\begin{align*}\textstyle 
		V_\theta(\mu,\nu)=W_\theta(\mu,\eta)= \lim_k W_\theta(\mu^k,\eta^k) \geq \limsup_k V_\theta(\mu^k,\nu^k).
	\end{align*}
	If $\theta$ is strictly convex, the infimum in \eqref{eq:Vtheta=infWtheta} is attained by a unique probability measure $\eta^k\leq_c\nu^k$,\footnote{The uniqueness of $\eta^k$, $T^k$ and $\pi^k$ was already shown in \cite[Theorem 2.1]{AlCoJo17} for $|\cdot|^\rho$, $\rho > 1$} which  in turn is the push-forward of $\mu^k$ under a $\mu^k$-uniquely defined map $T^k$. Moreover, the $W_\theta$-optimal transport plan $\pi^k \in \Pi(\mu^k,\nu^k)$ is uniquely determined by $\mu^k(dx)\delta_{T^k(x)}(dy)$: Suppose the contrary and let $T'(x) := \int_{\R^d} y \pi^k_x(dy)$, then $T'(\mu^k) \leq_c \nu$ and we find the contradiction
	\begin{align*}\textstyle 
		V_\theta(\mu^k,\nu^k) \leq \int_{\R^d} \theta(x-T'(x)) \mu(dx) < \int_{\R^d\times \R^d} \theta(x - y) \pi^k(dx,dy) = V_\theta(\mu^k,\nu^k).
	\end{align*}
	Hence, by convergence of the values of $V_\theta$ and tightness of $(\eta_k)_{k\in\N}$, we deduce the convergence of the $\eta^k$ to the optimal $\eta \leq_c \nu$ in $\mathcal W_1$. Suppose that $\mu^k = \mu$ for all $k\in\N$, then due to the uniqueness of the optimal transport maps $T$ between $T$ and $T(\mu$), we can apply Theorem~\cite[Corollary 5.23]{Vi09} and obtain convergence of the transport maps $T^k$ to $T$.
\end{proof}

\bibliography{joint_biblio}{}
\bibliographystyle{abbrv}
\end{document}